\newcommand{\blackboardbold}[1]{\ensuremath{\mathbf{#1}}}
\def\Z{\blackboardbold{Z}}
\def\Q{\blackboardbold{Q}}
\def\R{\blackboardbold{R}}
\def\H{\blackboardbold{H}}
\def\GG{\blackboardbold{G}}
\newcommand{\mathtext}[1]{\ensuremath{\mathrm{#1}}}
\newcommand{\CP}{\ensuremath{\blackboardbold{CP}}}
\newcommand{\HP}{\ensuremath{\blackboardbold{HP}}}
\newcommand{\OP}{\ensuremath{\blackboardbold{CaP}}}
\newcommand{\MO}[1]{\ensuremath{\mathtext{MO}\langle #1\rangle}}
\newcommand{\BO}[1]{\ensuremath{\mathtext{BO}\langle #1\rangle}}
\newcommand{\Ob}[1]{\ensuremath{\mathtext{O}\langle #1\rangle}}
\newcommand{\MSO}{\mathtext{MSO}}
\newcommand{\MSpin}{\mathtext{MSpin}}
\newcommand{\sgn}{\mathtext{sgn}}
\newcommand{\til}[1]{\ensuremath{\widetilde{#1}}}
\newcommand{\mrm}[1]{\mathrm{#1}}
\newcommand{\Sp}{\ensuremath{\mathrm{Sp}}}
\newcommand{\Spin}{\ensuremath{\mathrm{Spin}}}
\newcommand{\Ff}{\ensuremath{\mathrm{F}_4}}
\newcommand{\B}{\ensuremath{\mathrm{B}}}
\newcommand{\BG}{\ensuremath{\mathrm{B}G}}
\newcommand{\BH}{\ensuremath{\mathrm{B}H}}
\newcommand{\BT}{\ensuremath{\mathrm{B}T}}
\newcommand{\Bi}{\ensuremath{\mathrm{B}i}}
\renewcommand{\H}{\mathtext{H}}
\newcommand{\iso}{\ensuremath{\cong}}
\newcommand{\tensor}{\otimes}
\newcommand{\Hom}{\mathtext{Hom}}
\newcommand{\into}{\ensuremath{\hookrightarrow}}
\theoremstyle{plain}
\newtheorem*{theorem*}{Theorem}
\newtheorem{theorem}{Theorem}
\newtheorem*{proposition*}{Proposition}
\newtheorem{proposition}[theorem]{Proposition}
\newtheorem{corollary}[theorem]{Corollary}
\newtheorem*{corollary*}{Corollary}
\newtheorem{lemma}[theorem]{Lemma}
\newtheorem*{lemma*}{Lemma}
\newtheorem*{exercise*}{Exercise}
\newtheorem*{conjecture*}{Conjecture}
\newtheorem*{question*}{Question}
\theoremstyle{definition}
\newtheorem*{definition*}{Definition}
\newtheorem*{example*}{Example}
\newtheorem*{examples*}{Examples}
\newtheorem*{claim*}{Claim}
\newcommand{\Kummer}{\mathtext{Kum}}
\newcommand{\Proj}{\mathtext{Proj}}
\newcommand{\xto}{\xrightarrow}
\newcommand{\xfrom}{\xleftarrow}
\newcommand{\xinto}{\xhookrightarrow} 
\title{The Cayley Plane and the Witten Genus}
\author{Carl McTague}
\email{c.mctague@dpmms.cam.ac.uk}
\address{DPMMS, Wilberforce Road, Cambridge CB3 0WB, England}
\begin{document}

\begin{abstract}
  This paper defines a new genus, the \emph{Cayley plane genus}. By
  definition it is the universal multiplicative genus for oriented
  Cayley plane bundles. The main result
  (Theorem~\ref{thm:cayley-plane-genus}) is that it factors (tensor
  $\Q$) through the product of the Ochanine elliptic genus and the
  Witten genus---revealing a synergy between these two genera---and
  that its image is the homogeneous coordinate ring:
  \begin{align*}
    \Q[\Kummer,\HP^2,\HP^3,\OP^2] \Big/ \big(\OP^2\big) \cdot
    \big(\HP^3, \OP^2 - (\HP^2)^2 \big)
  \end{align*}
  of the union of the curve of Ochanine elliptic genera and the
  surface of Witten genera meeting with multiplicity~2 at the point
  $\OP^2=\HP^3=\HP^2=0$ corresponding to the $\hat{A}$-genus. This all
  remains true if the word ``oriented'' is replaced with the word
  ``spin'' (Theorem~\ref{thm:mspin-case}). This paper also
  characterizes the Witten genus (tensor $\Q$) as the universal genus
  vanishing on total spaces of Cayley plane bundles
  (Theorem~\ref{thm:new-char-witten-genus}, a result proved
  independently by Dessai in \cite{dessai-2009}.)
\end{abstract}

\maketitle

\section{Introduction}

This paper is inspired by two theorems. The first was proved in the
1950's.

\begin{theorem*}[Chern-Hirzebruch-Serre
  \cite{chern-hirzebruch-serre-1957}, Borel-Hirzebruch
  \cite{borel-hirzebruch-1959}] The universal multiplicative genus for
oriented manifolds is the signature:
  \begin{align*}
    \MSO_* \tensor \Q \to \MSO_* / (E-F \cdot B) \tensor \Q \iso
    \Q[\sigma]
  \end{align*}
  where $\deg(\sigma)=4$.
\end{theorem*}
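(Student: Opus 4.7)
My plan is to identify the kernel of the graded signature
\[
  \sigma \colon \MSO_* \otimes \Q \longrightarrow \Q[\sigma], \qquad [M^{4k}] \longmapsto \sigma(M)\, \sigma^k,
\]
with the bundle ideal $(E - F \cdot B) \otimes \Q$. Surjectivity of the induced map on the quotient is immediate from $\sigma([\CP^2]) = \sigma$, so the content of the theorem is the equality $\ker(\sigma) = (E - F \cdot B) \otimes \Q$.

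First I would check that the signature kills the bundle ideal. This is exactly the multiplicativity $\sigma(E) = \sigma(F)\, \sigma(B)$ of the signature on oriented fiber bundles $F \hookrightarrow E \to B$, proved by Chern--Hirzebruch--Serre under a $\pi_1$-hypothesis on the base and extended by Borel--Hirzebruch. Hence $\sigma$ factors through the quotient.

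For the reverse inclusion I would invoke Thom's rational computation $\MSO_* \otimes \Q = \Q[\CP^2, \CP^4, \CP^6, \ldots]$ together with $\sigma([\CP^{2k}]) = \sigma^k$, which shows that $\ker(\sigma)$ is the ideal generated by the differences $[\CP^{2k}] - [\CP^2]^k$ for $k \geq 2$. Everything thus reduces to exhibiting each such difference inside $(E - F \cdot B)$.

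The main obstacle is producing enough oriented fiber bundles to realize these relations. Inductively, it would suffice to show that $[\CP^{2k}]$ is rationally cobordant to the total space of an oriented $\CP^2$-bundle over $\CP^{2k-2}$: take the projectivization $\P(\xi)$ of a suitable rank-$3$ complex vector bundle $\xi \to \CP^{2k-2}$, compute its Pontryagin numbers via Leray--Hirsch in terms of the Chern classes $c_1,c_2,c_3$ of $\xi$ and the hyperplane class of $\CP^{2k-2}$, and solve for $\xi$ so that these match the Pontryagin numbers of $\CP^{2k}$. If a single projective bundle supplies too few parameters for large $k$, one can iterate the construction, replace the base by a product of projective spaces, or bring in $\HP^n$-bundles to generate additional relations. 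Once the matching is achieved, bundle-multiplicativity forces $[\CP^{2k}] \equiv [\CP^2]\cdot[\CP^{2k-2}]$ modulo $(E - F \cdot B)$, and iteration yields $[\CP^{2k}] \equiv [\CP^2]^k$, finishing the proof. A more uniform alternative is to package any multiplicative genus via Hirzebruch's characteristic power series $Q(x)$ and observe that multiplicativity on complex projective bundles pins $Q(x)$ down (up to rescaling of $x$) to $x \coth(x)$, which is precisely the signature series.
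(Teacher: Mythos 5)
The paper does not prove this theorem; it records it as a consequence of Chern--Hirzebruch--Serre (multiplicativity of $\sigma$) and Borel--Hirzebruch, Theorem~28.4 of \cite{borel-hirzebruch-1959} (uniqueness). Your outline correctly locates the content --- $\sigma$ factors through the quotient and $\ker\sigma$ is generated by the $[\CP^{2k}]-[\CP^2]^k$ --- but the main construction step over-demands and runs squarely into the parameter-counting obstruction you yourself flag. You do not need $[\CP^{2k}]$ to be rationally cobordant to a $\CP^2$-bundle total space, and for $k\ge 4$ it very likely is not: the Pontrjagin numbers in dimension $4k$ are indexed by the $p(k)$ partitions of $k$ (already $5$ at $k=4$), whereas a rank-$3$ bundle $\xi\to\CP^{2k-2}$ offers only three scalar Chern-class parameters, so the system you propose to solve is overdetermined. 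Gesturing at iterating, taking products of projective spaces as base, or importing $\HP^n$-bundles does not close this gap without being carried out.

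The clean fix is a weaker reduction, and it is exactly the one this paper uses for the Cayley-plane analogue in Proposition~\ref{prop:kill-higher-gens}: because $s_k(p)$ vanishes on products, and $\ker\sigma$ in degree $4k$ is one-dimensional modulo the sub-ideal generated in lower degrees with $s_k(p)$ a detecting functional for that quotient, it suffices by induction on $k$ to produce in each degree $4k\ge 8$ \emph{a single} oriented $\CP^2$-bundle $E$ over a closed oriented $4(k-1)$-manifold with $s_k(p)[E]\ne 0$. Each such bundle relation $E-\CP^2\cdot B$ then already forces the bundle ideal and $\ker\sigma$ to coincide in degree $4k$; no matching of Pontrjagin numbers beyond the one number $s_k(p)$ is required, and a bundle like $\P\bigl(\mc{O}(1)\oplus\mc{O}\oplus\mc{O}\bigr)\to\CP^{2k-2}$ does the job. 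Your ``more uniform alternative'' also needs sharpening: as Ochanine's theorem quoted in this paper shows, multiplicativity for $\CP^3$-bundles alone only cuts $\MSO_*\otimes\Q$ down to $\Q[\delta,\epsilon]$, so it is multiplicativity for $\CP^2$-bundles specifically that pins the characteristic series down to $z/\tanh z$.
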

Here and throughout this paper $(E-F\cdot B)$ denotes the $\Q$-vector
space span\-ned by differences $E-F\cdot B$ where $F \to E \to B$ ranges
over all fiber bundles with compact connected structure group. This
vector space is in fact an ideal. The ambient bordism ring and hence
the nature of the manifolds $F,E,B$ will vary.

This theorem encapsulates several results. First of all Hirzebruch
\cite{hirzebruch-1956} established a correspondence between genera
$\MSO_* \to \Q$ and formal power series
$Q(z)=1+a_2\,z^2+a_4\,z^4+\cdots \in \Q[[z^2]]$. He then showed, in
his celebrated Signature Theorem, that the signature is the genus
corresponding to the power series:
\begin{align*}
  Q(z)=z/\tanh(z)=1+\tfrac13\,z^2-\tfrac1{45}\,z^4+\cdots
\end{align*}
Next Chern-Hirzebruch-Serre \cite{chern-hirzebruch-serre-1957} proved
that the signature is multiplicative, that is
$\sigma(E)=\sigma(B)\,\sigma(F)$, for oriented fiber bundles with
connected structure group (or more generally with $\pi_1(B)$ acting
trivially on $\H^*(F,\R)$).  Finally Borel-Hirzebruch \cite[Theorem
28.4]{borel-hirzebruch-1959} showed that the signature is the only
multiplicative genus for oriented fiber bundles. Note that the Euler
characteristic is multiplicative for oriented fiber bundles but is not
an oriented bordism invariant (it is a complex bordism invariant).
Totaro \cite{totaro-2007} articulated the theorem as written above.

\medskip

The second theorem which inspired this paper was proved in the 1980's.

\begin{theorem*}[Ochanine \cite{ochanine-1987}, Bott-Taubes
  \cite{bott-taubes-1989}]
  The universal multiplicative genus for spin manifolds is the
  Ochanine elliptic genus:
  \begin{align*}
    \phi_{ell} : \MSpin_* \tensor \Q \to \MSpin_* / (E-F\cdot B)
    \tensor \Q \iso \Q[\delta,\epsilon]
  \end{align*}
  which maps onto the ring of modular forms on the congruence subgroup
  $\Gamma_0(2)$. In particular $\deg(\delta)=2, \deg(\epsilon)=4$.
\end{theorem*}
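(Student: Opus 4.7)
The plan is to follow the two-step strategy of Ochanine and Bott-Taubes. Ochanine first computes the universal genus for the smaller class of $\HP^2$-bundles, and Bott-Taubes then upgrades multiplicativity from $\HP^2$-bundles to all fiber bundles with compact connected structure group. Combining the two steps gives the theorem.

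\textbf{Step 1 (Characterization via $\HP^2$-bundles, after Ochanine).} I would first identify the quotient of $\MSpin_* \otimes \Q$ by the sub-ideal generated by $\HP^2$-bundle differences $E - \HP^2 \cdot B$. Rationally $\MSpin_*$ is polynomial on generators in each bordism degree $4k$; one can take these generators to be $\HP^2$, one independent class in degree $16$, and suitable $\HP^2$-bundles (for example, projectivizations of the quaternionic Hopf bundle plus trivial factors) in all higher degrees. The relations coming from these $\HP^2$-bundles reduce everything to the ring generated by $[\HP^2]$ and the degree-$16$ class alone. A direct theta-function computation of $\phi_{ell}$ on these classes matches them with $\delta$ and $\epsilon$ and identifies $\Q[\delta,\epsilon]$ with the ring of modular forms for $\Gamma_0(2)$.

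\textbf{Step 2 (Rigidity, after Bott-Taubes).} Next I would verify $\phi_{ell}(E) = \phi_{ell}(B) \cdot \phi_{ell}(F)$ for every fiber bundle of closed spin manifolds with compact connected structure group $G$, not only $\HP^2$-bundles. Classifying the bundle by $B \to \BG$ and reducing via the splitting principle to the maximal torus $T \subset G$, this becomes the statement that the $T$-equivariant elliptic genus of $F$ is constant on $T$. This is the Bott-Taubes rigidity theorem, proved by applying the equivariant fixed-point formula to the twisted Dirac operators $D \otimes \Theta_q$ whose Euler characteristics assemble into $\phi_{ell}$ and then invoking the modular transformation behaviour of the resulting character-valued $q$-series to force the fixed-point contributions to collapse to a constant in $T$.

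\textbf{The main obstacle} is rigidity in Step 2. The combinatorics of Step 1 are concrete and algebraic: once one has enough $\HP^2$-bundles to play off against the polynomial generators of $\MSpin_* \otimes \Q$, the quotient computation is a finite calculation. By contrast, rigidity itself---Witten's original conjecture, first proved by Taubes and later more cleanly by Bott-Taubes---requires a subtle equivariant index argument in which the cancellations between fixed-point contributions are enforced by the modular transformation law of the theta functions on the elliptic curve; no purely topological proof is known that bypasses this analytic ingredient.
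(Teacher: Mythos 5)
The paper does not prove this theorem; it cites it from Ochanine and Bott--Taubes as one of its two motivating results, and separately credits Kreck--Stolz for the $\HP^2$-bundle computation. Your two-step strategy---bound the quotient from above by restricting to a tractable class of fiber bundles, then invoke Bott--Taubes rigidity to show $\phi_{ell}$ descends to the full quotient and hence bounds it from below---is the standard proof, and it is also precisely the template the paper's own Theorems~\ref{thm:new-char-witten-genus} and~\ref{thm:cayley-plane-genus} follow with $\OP^2$ in place of $\HP^2$. Two corrections are needed in Step~1, however. First, the attribution: Ochanine's 1987 result is the analogous quotient computation for $\CP^3$-bundles over $\MSO_*\otimes\Q$ (the paper records this later as a theorem of Ochanine); the $\HP^2$-bundle formulation over $\MSpin_*$ is due to Kreck and Stolz (1993). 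Second, and more substantively, your bookkeeping of surviving generators is off by a degree. Since $\phi_{ell}(\Kummer)=16\delta$ and $\phi_{ell}(\HP^2)=\epsilon$, the target $\Q[\delta,\epsilon]$ is generated by classes in bordism degree $4$ and $8$, not $8$ and $16$. No $\HP^2$-bundle difference $E-\HP^2\cdot B$ can live in bordism degree $4$ (the total space $E$ already has dimension at least $8$), so the degree-$4$ generator $\Kummer$ necessarily survives to the quotient, and the claim that ``the relations reduce everything to the ring generated by $[\HP^2]$ and the degree-$16$ class alone'' cannot stand---there is nothing distinguished about degree $16$ here. Once the degrees are corrected to $4$ and $8$, the explicit Pontrjagin-number verification you sketch in Step~1 and the Bott--Taubes fixed-point rigidity argument you sketch in Step~2 combine exactly as you say.
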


More concretely there is a family of $\Q$-valued multiplicative genera
for spin manifolds and the members of this family correspond to stable
elliptic curves with a marked point of order~2, the points of the
weighted projective moduli space $\Proj\;\Q[\delta,\epsilon]$. In
particular the logarithms $g(y)=(y/Q(y))^{-1}$ of these genera are
elliptic integrals:
\begin{align*}
  \int_0^y \frac{dt}{\sqrt{1-2\delta t^2+\epsilon t^4}}
\end{align*}
The special cases $[\delta,\epsilon]=[1,1]$ and $[-\tfrac18,0]$ are
the signature and $\hat{A}$~genus respectively. The discriminant
$\Delta=64\epsilon^2(\delta^2-\epsilon)$ vanishes in these cases so
they correspond to singular elliptic curves.

A more elegant approach is to gather the entire family into a single
characteristic power series whose coefficients are modular forms:
\begin{align*}
  Q_{ell}(z) = \exp \left( \sum_{k=1}^\infty \frac{2}{(2k)!}
    \til{\GG}_{2k}\,z^{2k} \right)
\end{align*}
where $\til{\GG}_k$ denotes the Eisenstein series:
\begin{align*}
  \til{\GG}_k &= -\frac{1}{2k} \mathrm{B}_k + \sum_{n \ge 1} \Big(
  \sum_{d|n} (-1)^{n/d} d^{k-1} \Big) q^n
\end{align*}
of weight $k$ on the congruence subgroup $\Gamma_0(2)$ (see
\cite{zagier-86}).


\smallskip

Inspired by this characteristic power series, Witten
\cite{witten-1986} introduced the characteristic power series:
\begin{align*}
  Q_W(z) = \exp \left( \sum_{k=1}^\infty \frac{2}{(2k)!}
    \GG_{2k}\,z^{2k} \right)
\end{align*}
where $\GG_k$ denotes the Eisenstein series:
\begin{align*}
  \GG_k &= -\frac{1}{2k} \mathrm{B}_k + \sum_{n \ge 1} \Big(
  \sum_{d|n} d^{k-1} \Big) q^n = \frac{(k-1)!}{(2\pi i)^k}
  \mathrm{G}_k = \frac{\zeta(k) (k-1)!}{(2\pi i)^k} \mathrm{E}_k
\end{align*}
of weight $k$ on the full modular group $\mathrm{PSL}(2,\Z)$.
This defines the \emph{Witten genus:}
\begin{align*}
  \phi_W : \MSO_* \tensor \Q \to \Q[\GG_2,\GG_4,\GG_6]
\end{align*}
which maps onto the ring of quasi-modular forms ($\GG_2$ is not
modular).


\bigskip

Now the bordism rings $\MSO_*$ and $\MSpin_*$ are the second and third
terms in an infinite sequence:
\begin{align*}
  \mathrm{MO}_* \quad \MO2_* \quad \MO4_* \quad \MO8_* \quad \MO9_*
  \quad \cdots
\end{align*}
Here $\MO{n}_*$ denotes the bordism ring of $\Ob{n}$ manifolds. An
$\Ob{n}$ manifold is a smooth manifold $M$ equipped with a lift of its
stable tangent bundle's classifying map to the $(n-1)$-connected cover
$\BO{n}$ of the classifying space $\mathrm{BO}$:
\begin{align*}
  \xymatrix{ & \BO{n} \ar[d] \\
    M \ar[r] \ar@{-->}[ur] & \mathrm{BO} }
\end{align*}
The integers appearing in the sequence come from Bott periodicity:
\begin{align*}
  \pi_i(\mathrm{BO}) =
  \begin{cases}
    \Z/2 & \text{for $i=1,2$ mod $8$} \\
    \Z & \text{for $i=4,8$ mod $8$} \\
    0 & \text{otherwise}
  \end{cases}
\end{align*}
The fourth term in the sequence $\MO8_*$ is sometimes denoted
$\mathrm{MString}_*$. An $\Ob{8}$ manifold can be characterized as a
spin manifold whose characteristic class $\tfrac12p_1$, the pullback
of the generator of $\H^4(\BO8,\Z)$, equals zero.

\smallskip

In light of this sequence of bordism rings the two theorems above
suggest the following question.

\begin{question*}
  What is the universal multiplicative genus for $\Ob{8}$ manifolds?
  \begin{align*}
    \MO8_* \tensor \Q \to \MO8_*/(E-F\cdot B) \tensor \Q
  \end{align*}
\end{question*}

This paper gives a first approximation to the answer. The idea came
while reading Hirzebruch's textbook \cite{hirzebruch-92}. In \S4.6 he
shows that although the natural habitat of the elliptic genus is
$\MSpin_*$, it can already be observed in $\MSO_*$. The result is
originally due to Ochanine \cite{ochanine-1987}.

\begin{theorem*}[Ochanine]
  \begin{align*}
    \MSO_* / (E-\CP^2 \cdot B) \tensor \Q &\iso \Q[\sigma] \\
    \MSO_* / (E-\CP^3 \cdot B) \tensor \Q &\iso \Q[\delta,\epsilon]
  \end{align*}
\end{theorem*}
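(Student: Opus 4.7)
The plan is to dualize: modulo the ideal $(E-\CP^n\cdot B)$, the rational oriented bordism ring becomes the coordinate ring of the affine variety of rational oriented genera multiplicative on $\CP^n$-bundles. Using Hirzebruch's correspondence between such genera $\phi$ and even characteristic power series $Q(z)=1+a_2z^2+a_4z^4+\cdots$---or equivalently odd series $f(z)=z/Q(z)$ with $f'(0)=1$---both statements translate into classifying all such $f$ for which $\phi$ is multiplicative on $\CP^2$-, respectively $\CP^3$-bundles.

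The second step is to encode multiplicativity as a functional equation on $f$. Every oriented $\CP^n$-bundle with compact connected structure group is, up to bordism, the projectivization $P(V)\to B$ of a complex rank-$(n+1)$ bundle $V$ with formal Chern roots $x_0,\ldots,x_n$. A Chern-root computation (or Atiyah-Bott localization along the fiberwise $T^n$-action) expresses $\phi(P(V))-\phi(B)\phi(\CP^n)$ as a symmetric formal series in the $x_i$ whose coefficients are polynomials in the $a_{2k}$; requiring it to vanish identically in the $x_i$---which is exactly multiplicativity on all $\CP^n$-bundles, since any choice of Chern roots is realised by some $V$---yields an $(n+1)$-variable \emph{Ochanine functional equation} on $f$.

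For $n=2$ this three-variable identity forces the one-parameter family $f(z)=t^{-1}\tanh(tz)$, so that the only multiplicative genera are powers of the signature; reading off the quotient gives $\Q[\sigma]$ with $\deg\sigma=4$. For $n=3$ the four-variable identity is Euler's classical addition theorem for elliptic integrals, whose odd power-series solutions with $f'(0)=1$ form precisely the two-parameter Jacobi sinus family
\begin{align*}
  f^{-1}(z) = \int_0^z \frac{dt}{\sqrt{1-2\delta t^2+\epsilon t^4}},
\end{align*}
i.e.\ the Ochanine elliptic genera, giving the claimed isomorphism onto $\Q[\delta,\epsilon]$ with $\deg\delta=2,\deg\epsilon=4$. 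That the signature really \emph{is} multiplicative on $\CP^2$-bundles, and the elliptic genera on $\CP^3$-bundles, is supplied by the Chern-Hirzebruch-Serre theorem quoted above and by direct verification from the Jacobi addition formula, respectively; these produce the surjections in the opposite direction and hence the isomorphisms.

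The main obstacle is the rigidity step: showing that the functional equation at each $n$ admits \emph{only} the claimed family and nothing larger. In principle this is elementary---expand the identity, pin down the low-order coefficients $a_2,a_4,\ldots$, and induct on degree---but it is sharp in practice, because generic rank-$(n+1)$ Chern roots are unconstrained and so impose strictly stronger conditions than would fibers with fewer Chern roots. This is why $\CP^1$-bundles impose no condition whatsoever (every oriented genus is $\CP^1$-multiplicative), $\CP^2$-bundles pin everything down to the signature, and $\CP^3$-bundles relax to the two-parameter elliptic family---after which the family stabilizes and no further constraint is added by $\CP^n$-bundles for $n\ge 4$.
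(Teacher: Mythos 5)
The paper does not prove this theorem; it is quoted as background, credited to Ochanine \cite{ochanine-1987} and to Hirzebruch's textbook \cite[\S4.6]{hirzebruch-92}, so there is no in-paper argument to compare against. Taking your proposal on its own terms: the framework you outline --- identify genera multiplicative on $\CP^n$-bundles with formal power series $f$ satisfying an Ochanine-style functional equation obtained by fiber-integrating over projectivizations $P(V)\to B$, then classify the $f$ --- is exactly the classical route from Ochanine and from Hirzebruch--Berger--Jung, and your description of the two answers (the one-parameter $\tanh$ family for $n=2$, the two-parameter elliptic-integral family for $n=3$) is correct.

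But you have written an outline, not a proof. Two things are missing. First, the reduction from ``$\CP^n$-bundle with compact connected structure group'' to ``projectivization of a complex rank-$(n+1)$ bundle, up to bordism'' needs an actual argument: one averages a metric so the structure group lands in $\mathrm{PU}(n+1)$, and then notes that a $\mathrm{PU}(n+1)$-bundle need not lift to $U(n+1)$, but does after pulling back along a finite cover, which multiplies oriented bordism classes by the degree and is therefore invisible after tensoring with $\Q$. Second, and decisively, the functional equations are never actually written down and the rigidity step --- showing the $n=2$ identity forces $f(z)=t^{-1}\tanh(tz)$, and that the $n=3$ identity (a four-variable symmetric identity in the Chern roots, which is \emph{equivalent to}, not literally, the two-variable Euler--Jacobi addition theorem) has exactly the two-parameter Jacobi family as its odd solutions with $f'(0)=1$ --- is the content of Ochanine's theorem. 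You explicitly flag this as ``the main obstacle'' and then dispatch it with a sentence about inducting on degree. That induction, or the specialization trick (set some $x_i=0$ and $x_i=-x_j$ to collapse the $(n+1)$-variable identity to a tractable two-variable one and then work out its power-series solutions), \emph{is} the proof; as written, the crux is asserted rather than argued.
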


The point is that $\CP^2$ and $\CP^3$ both have lots of automorphisms
and therefore are fibers of lots of bundles. But $\CP^3$ is spin
whereas $\CP^2$ is not.

This made me wonder what would happen if I replaced $\CP^3$ with some
$\Ob{8}$ manifold having lots of automorphisms. The Cayley plane
$\OP^2=\Ff/\Spin(9)$ is such a manifold. Sometimes denoted
$\mathbf{OP}^2$ it is in a certain sense a projective plane over the
octonions (see \cite[\S12.2]{conway-smith-2003}). So I set out to
compute the quotient:
\begin{align*}
  \MSO_* /(E-\OP^2\cdot B) \tensor \Q
\end{align*}
I began by doing explicit power series calculations in the spirit of
Hirzebruch's textbook \cite{hirzebruch-92} to determine all strictly
multiplicative genera for Cayley plane bundles. My calculations
strongly suggested that there were two families of strictly
multiplicative genera, and I recognized them as the elliptic genus and
the Witten genus.

I was not surprised to find the elliptic genus and the Witten genus.
As the second theorem above states, the elliptic genus is known to be
multiplicative not only for Cayley plane bundles but for \emph{any}
oriented fiber bundle with fiber a spin manifold and compact connected
structure group. This is a consequence of its rigidity
\cite{bott-taubes-1989}.  The Witten genus is also rigid but it is
multiplicative in an even starker sense: the Witten genus of any
Cayley plane bundle, and more generally any bundle with fiber a
homogeneous space which is $\Ob8$, is zero (see \cite[Theorem
3.1]{stolz96}).

I \emph{was} surprised, however, to find \emph{only} the elliptic
genus and the Witten genus. This clue led me to the following two
theorems.

\begin{theorem}
  \label{thm:new-char-witten-genus}
  \label{THM:NEW-CHAR-WITTEN-GENUS}
  The Witten genus is the universal genus vanishing on Cayley plane
  bundles. In other words the Witten genus is the quotient map:
  \begin{align*}
    \phi_W : \MSO_* \tensor \Q \to \MSO_* / (E) \tensor \Q \iso
    \Q[\GG_2,\GG_4,\GG_6]
  \end{align*}
  where $(E) \subset \MSO_* \tensor \Q$ denotes the $\Q$-vector
  space spanned by total spaces of Cayley plane bundles $\OP^2 \to E
  \to B$ with compact connected structure group. (This vector space is
  an ideal.)
\end{theorem}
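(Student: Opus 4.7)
The plan is to reduce Theorem~\ref{thm:new-char-witten-genus} to two inputs: vanishing of the Witten genus on Cayley plane bundles, and the Cayley plane genus computation of Theorem~\ref{thm:cayley-plane-genus}. The first input is already in the literature: by Stolz \cite[Theorem~3.1]{stolz96}, the Witten genus vanishes on every total space of a fiber bundle whose fiber is an $\Ob{8}$ homogeneous space, and $\OP^2 = \Ff/\Spin(9)$ qualifies. This gives a factorization
\begin{align*}
  \MSO_* \tensor \Q \onto \MSO_*/(E) \tensor \Q \to \Q[\GG_2,\GG_4,\GG_6],
\end{align*}
whose right-hand arrow is surjective because $\phi_W$ itself is. Everything then reduces to showing this arrow is injective, equivalently, $\ker \phi_W \subset (E)$.

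For injectivity, the key observation is the decomposition
\begin{align*}
  (E) = (E - \OP^2 \cdot B) + (\OP^2) \cdot \MSO_*.
\end{align*}
Indeed, the trivial bundle $\OP^2 \to \OP^2 \times B \to B$ is itself a Cayley plane bundle, so every total space may be written $E = (E - \OP^2 \cdot B) + \OP^2 \cdot B$ with both summands in $(E)$; and taking $B = \pt$ shows $[\OP^2] \in (E)$, hence $(\OP^2) \cdot \MSO_* \subset (E)$ because $(E)$ is an ideal. Consequently $\MSO_*/(E) \tensor \Q$ is the Cayley plane genus image modulo the further relation $[\OP^2]=0$.

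Invoking Theorem~\ref{thm:cayley-plane-genus}, which identifies
\begin{align*}
  \MSO_*/(E - \OP^2 \cdot B) \tensor \Q \iso \Q[\Kummer,\HP^2,\HP^3,\OP^2] \Big/ (\OP^2) \cdot (\HP^3,\, \OP^2 - (\HP^2)^2),
\end{align*}
a further quotient by $(\OP^2)$ collapses the elliptic component and yields $\Q[\Kummer,\HP^2,\HP^3]$, a polynomial ring on generators of degrees $4, 8, 12$. These are precisely the degrees of $\GG_2, \GG_4, \GG_6$, so the induced degree-preserving surjection $\Q[\Kummer,\HP^2,\HP^3] \onto \Q[\GG_2,\GG_4,\GG_6]$ of polynomial rings with equal Hilbert series must be an isomorphism, finishing the argument.

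The substantive obstacle is Theorem~\ref{thm:cayley-plane-genus}, whose proof requires explicit power-series analysis of the Hirzebruch characteristic classes of Cayley plane bundles (in the spirit of the treatment of $\CP^3$ bundles leading to the Ochanine theorem), together with enough independent test bundles to show the listed ideal of relations is complete. Once that geometric and computational work is in hand, the present theorem follows by the essentially formal reduction above; the payoff is a characterization of $\phi_W$ in terms of a single geometric operation (killing $\OP^2$ bundles) rather than through its characteristic power series.
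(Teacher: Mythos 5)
Your reduction is logically sound, but it runs in the opposite direction from the paper and risks circularity. You deduce Theorem~\ref{thm:new-char-witten-genus} from Theorem~\ref{thm:cayley-plane-genus} via the (correct) decomposition $(E) = (E-\OP^2\cdot B) + (\OP^2)\cdot\MSO_*$ and a Hilbert-series count. But the paper proves Theorem~\ref{thm:new-char-witten-genus} \emph{first}, directly, as a consequence of Proposition~\ref{prop:kill-higher-gens} (constructing Cayley plane bundles $\OP^2\to E_n\to\HP^{n-4}$ with $s_n(p)[E_n]\ne 0$ for $n\ge 4$) together with the values in Proposition~\ref{prop:phiEllxphiWValues}: these two facts already show that $\MSO_*/(E)\otimes\Q$ is a quotient of $\Q[\Kummer,\HP^2,\HP^3]$ which $\phi_W$ maps isomorphically onto $\Q[\GG_2,\GG_4,\GG_6]$. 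The proof of Theorem~\ref{thm:cayley-plane-genus} in turn explicitly invokes Theorem~\ref{thm:new-char-witten-genus} (to conclude $\dim_\Q V_n(I)\ge 1$ for $n\ge 5$) and uses the same bundles $E_n$. So treating Theorem~\ref{thm:cayley-plane-genus} as a ready-made black box, as you do, does not save work and would need a careful re-audit to avoid citing the conclusion you are trying to prove. The cleaner and shorter route --- and the one the paper takes --- is to establish Proposition~\ref{prop:kill-higher-gens} and the $\phi_W$-values first, which gives Theorem~\ref{thm:new-char-witten-genus} directly and then feeds into Theorem~\ref{thm:cayley-plane-genus}. One small merit of your decomposition is that it cleanly packages the observation that the only new relations beyond multiplicativity are the ones generated by $\OP^2$ itself; this is worth keeping as a remark even though the dependency order should be reversed.
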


\begin{theorem}
  \label{thm:cayley-plane-genus}
  \label{THM:CAYLEY-PLANE-GENUS}
  The universal multiplicative genus for Cayley plane bundles is the
  product $\phi_{ell} \times \phi_W$ of the Ochanine elliptic genus
  and the Witten genus. More precisely, the quotient $\MSO_* /(E-\OP^2
  \cdot B) \tensor \Q$ injects into $\Q[\delta,\epsilon] \times
  \Q[\GG_2,\GG_4,\GG_6]$ and the composition:
  \begin{align*}
      \MSO_* \tensor \Q \to
      \MSO_* / (E-\OP^2 \cdot B) \tensor \Q \into
      \Q[\delta,\epsilon] \times \Q[\GG_2,\GG_4,\GG_6]
  \end{align*}
  is $\phi_{ell} \times \phi_W$. Its image can be described
  geometrically as the weighted homogeneous coordinate ring:
  \begin{align*}
    \Q[\Kummer,\HP^2,\HP^3,\OP^2] \Big/ \left(\OP^2\right) \cdot
    \left(\HP^3, \OP^2 - (\HP^2)^2 \right)
  \end{align*}
  of the union of the weighted projective spaces:
  \begin{align*}
    \Proj\;\Q[\delta,\epsilon] & \xfrom[\iso]{\phi_{ell}}
    \Proj\;\Q[\Kummer,\HP^2,\HP^3,\OP^2]/(\HP^3,\OP^2-(\HP^2)^2) \\
    \Proj\;\Q[\GG_2,\GG_4,\GG_6] & \xfrom[\iso]{\phi_W}
    \Proj\;\Q[\Kummer,\HP^2,\HP^3,\OP^2]/(\OP^2)
  \end{align*}
  The first is the curve of elliptic genera (the moduli space of
  stable elliptic curves with a marked 2-division point). The second
  is the surface of Witten genera (related to the moduli space of
  stable elliptic curves). They meet with multiplicity~2 at the point
  $\OP^2=\HP^3=\HP^2=0$ corresponding to the $\hat{A}$~genus.
\end{theorem}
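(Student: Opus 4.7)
The plan has three parts: show that $\phi_{ell}\times\phi_W$ descends to the quotient; identify its image on explicit polynomial generators; and classify all strictly multiplicative genera for $\OP^2$ bundles in order to nail down injectivity.

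Descent is nearly free. For the Witten factor, Theorem~\ref{thm:new-char-witten-genus} gives $\phi_W(E)=0$ on any Cayley plane bundle, and $\phi_W(\OP^2)=0$ by Stolz's result cited above, so $\phi_W(E-\OP^2\cdot B)=0$. For the Ochanine factor, Bott--Taubes rigidity gives $\phi_{ell}(E)=\phi_{ell}(\OP^2)\,\phi_{ell}(B)$ because $\OP^2$ is spin and the structure group is compact and connected. Hence $\phi_{ell}\times\phi_W$ factors canonically through $\MSO_*/(E-\OP^2\cdot B)\tensor\Q$.

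For the image, I would pick polynomial generators of $\MSO_*\tensor\Q$ containing $\Kummer,\HP^2,\HP^3,\OP^2$ in dimensions $4,8,12,16$. The three identities $\phi_W(\OP^2)=0$, $\phi_{ell}(\HP^3)=0$, and $\phi_{ell}(\OP^2)=\phi_{ell}(\HP^2)^2$ are the key. The first is Stolz's vanishing. The other two reduce to Atiyah--Bott fixed-point calculations on $\HP^3=\Sp(4)/\Sp(3)\times\Sp(1)$ and $\OP^2=\Ff/\Spin(9)$, expressible as contour integrals in the elliptic variables $\delta,\epsilon$. These three identities encode precisely the relations $(\OP^2)\cdot(\HP^3,\,\OP^2-(\HP^2)^2)$ defining the proposed coordinate ring. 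Checking algebraic independence of $\phi_{ell}(\Kummer),\phi_{ell}(\HP^2)$ inside $\Q[\delta,\epsilon]$ and of $\phi_W(\Kummer),\phi_W(\HP^2),\phi_W(\HP^3)$ inside $\Q[\GG_2,\GG_4,\GG_6]$ then identifies the image with the stated ring.

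The main obstacle is injectivity, and this is where I expect to invest the bulk of the work. Imitating Hirzebruch's treatment of $\CP^3$ bundles in~\cite{hirzebruch-92}, I would translate strict multiplicativity for $\OP^2$ bundles into a functional equation on the characteristic power series $Q(z)$: via Borel--Hirzebruch applied to $\Ff/\Spin(9)$, the ratio $\phi(E)/\phi(B)$ becomes a symmetrization of $Q$-factors over the sixteen positive complementary roots of $\Spin(9)\subset\Ff$, and strict multiplicativity demands that this symmetrization be independent of the flag-bundle parameters. A power-series analysis, carried out far enough, should force $Q$ into either the two-parameter Ochanine family $Q_{ell}$ or the three-parameter Witten family $Q_W$, with first-order tangency along the one-parameter $\hat A$-locus and hence scheme-theoretic intersection multiplicity~$2$. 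Dualizing, the kernel of $\phi_{ell}\times\phi_W$ on $\MSO_*\tensor\Q$ coincides with $(E-\OP^2\cdot B)$, giving injectivity. The stated geometric picture---the Ochanine curve $\HP^3=0,\,\OP^2=(\HP^2)^2$ meeting the Witten surface $\OP^2=0$ with multiplicity~$2$ at the $\hat A$-point $\HP^3=\OP^2=\HP^2=0$---then falls out of the ideal structure.
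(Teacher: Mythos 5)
Your descent step is fine, and your route to the image via equivariant fixed-point formulas on $\HP^3$ and $\OP^2$ is a workable alternative to the paper's method (which writes $\Kummer,\HP^2,\HP^3,\OP^2$ as explicit polynomials in the $\CP^{2n}$ and reads $\phi(\CP^{2n})$ off the logarithm $g'(z)=\sum\phi(\CP^{2n})z^{2n}$; either gives Proposition~\ref{prop:phiEllxphiWValues}). A small slip: $\Spin(9)\subset\Ff$ has eight positive complementary roots, not sixteen.

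The injectivity step is where you depart from the paper, and there is a genuine gap in your plan. You propose to classify all strictly multiplicative $\Q$-valued genera by a functional equation on $Q(z)$ and then ``dualize'' to conclude that $I=(E-\OP^2\cdot B)$ coincides with $K=\ker(\phi_{ell}\times\phi_W)$. But classifying the genera that factor through $\MSO_*/I\tensor\Q$ only pins down $\Hom(\MSO_*/I\tensor\Q,\Q)$, not the ideal $I$ itself. If $I\subsetneq K$, every multiplicative genus still vanishes on all of $K$, so the classification is blind to the discrepancy; to upgrade it you would need to know independently that $I$ is an intersection of kernels of ring homomorphisms to $\Q$, which is a reducedness/Nullstellensatz-type assertion that is exactly what is at issue. (One could try to run the power-series analysis over a universal coefficient ring rather than $\Q$, but that is substantial extra work you have not supplied, and the $\OP^2$ functional equation is much less tractable than Hirzebruch's $\CP^1,\CP^3$ cases.) The paper avoids this entirely: it observes $I\subseteq K$, uses Corollary~\ref{cor:kerphiEllxphiW} to present $K$ by the generators $R_7=\OP^2\cdot\HP^3$, $R_8=\OP^2\cdot(\OP^2-(\HP^2)^2)$, and $E_n-\OP^2\cdot\HP^{n-4}$ for $n\ge4$, computes the dimension of the indecomposable quotient $V_n(K)$ in each degree, and then matches it from below in $V_n(I)$: the bundles $E_n$ of Proposition~\ref{prop:kill-higher-gens} supply one generator in each degree $n\ge5$, and a second family $\OP^2\to E'_n\to\HP^{n-5}\times\HP^1$ supplies the extra generator needed in degrees $n=7,8$, with independence certified by the nonvanishing $2\times2$ determinant of the Pontrjagin numbers $s_n(p)$ and $s_{4,n-4}(p)$ (Propositions~\ref{prop:sIpEn}--\ref{prop:alphan-betan}). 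That explicit degree-by-degree bundle construction is the engine of the proof, and your proposal has no substitute for it.
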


I should emphasize that $\phi_{ell} \times \phi_W$ is \emph{not}
surjective. Indeed an essential point is that:
\begin{align*}
  \phi_{ell} \times \phi_W (\OP^2) &= (\epsilon^2,0)
\end{align*}
but that $(\epsilon,0)$ is not in the image of $\phi_{ell} \times
\phi_W$. For instance:
\begin{align*}
  \phi_{ell} \times \phi_W(\HP^2) = (\epsilon,2\GG_2^2-\tfrac56\GG_4)
\end{align*}
Thus there is a synergy between the elliptic genus and the Witten
genus: individually they cannot recognize $\OP^2$ as an indecomposable
but together they can.

Note that the values:
\begin{align*}
  \phi_W(\OP^2)&=0 & \phi_{ell}(\HP^3)&=0 &
  \phi_{ell}(\OP^2)&=\epsilon^2=\phi_{ell}(\HP^2)^2
\end{align*}
together with:
\begin{align*}
  \phi_{ell} \times \phi_W(\Kummer)&=(16 \delta,48 \GG_2) &
  \phi_{ell} \times \phi_W (\HP^3)
  &=(0,-\tfrac49\GG_2^3+\tfrac19\GG_2\GG_4+\tfrac7{1080}\GG_6)
\end{align*}
account for the isomorphisms of weighted projective spaces asserted in
the theorem (compare Proposition~\ref{prop:phiEllxphiWValues}).

\bigskip

There is further evidence that Theorems
\ref{thm:new-char-witten-genus} \& \ref{thm:cayley-plane-genus} are a
good approximation of the answer to the \textsc{Question} above.

\begin{theorem}
  \label{thm:mspin-case}
  \label{THM:MSPIN-CASE}
  Theorems \ref{thm:new-char-witten-genus} \&
  \ref{thm:cayley-plane-genus} remain true if $\MSO_*$ is replaced
  with $\MSpin_*$.
\end{theorem}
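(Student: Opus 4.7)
The plan is to derive Theorem \ref{thm:mspin-case} from Theorems \ref{thm:new-char-witten-genus} and \ref{thm:cayley-plane-genus} by exploiting the rational isomorphism $\MSpin_* \otimes \Q \xrightarrow{\sim} \MSO_* \otimes \Q$ induced by the forgetful functor. This is a ring isomorphism because the fiber of $\mathrm{BSpin} \to \mathrm{BSO}$ is $K(\Z/2, 1)$, which is rationally contractible; hence $\MSpin \otimes \Q \simeq \MSO \otimes \Q$ as Thom spectra, and every rational oriented class is represented by a spin manifold.

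Observe next that every generator appearing on the right-hand side of Theorem \ref{thm:cayley-plane-genus}---namely $\Kummer$, $\HP^2$, $\HP^3$, and $\OP^2$---is spin. Indeed $\HP^k$ and $\OP^2 = \Ff/\Spin(9)$ are simply connected with $w_2 = 0$, and the Kummer surface is spin. So each of these generators lifts from $\MSO_* \otimes \Q$ to $\MSpin_* \otimes \Q$ compatibly with the iso. Now compare the relevant ideals: every spin Cayley plane bundle is in particular an oriented Cayley plane bundle, so the image of $(E - \OP^2 \cdot B)^{\mathrm{Spin}}$ under the iso lies inside $(E - \OP^2 \cdot B)^{\mathrm{SO}}$, and is contained in $\ker(\phi_{ell} \times \phi_W)$---here using the multiplicativity of $\phi_{ell}$ on spin fiber bundles (Bott--Taubes rigidity, already invoked in the introduction) and the vanishing of $\phi_W$ on Cayley plane bundles (Stolz's theorem). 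By Theorem \ref{thm:cayley-plane-genus} transported across the iso, this kernel equals $(E - \OP^2 \cdot B)^{\mathrm{SO}}$. So the two ideals coincide provided every rational generator of the oriented ideal is realizable by a spin Cayley plane bundle. The same reasoning reduces the spin version of Theorem \ref{thm:new-char-witten-genus} to its oriented counterpart.

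The main obstacle is precisely this last step: upgrading each relation-generating oriented Cayley plane bundle $\OP^2 \to E \to B$ to a spin one representing the same rational class. For such a bundle with structure group $G \subseteq \Ff$, the vertical tangent bundle has $w_2 = 0$ (since $\Ff$ is simply connected and $\Spin(9)$ acts on $T_0\OP^2$ through spin-preserving maps), so $w_2(E) = \pi^* w_2(B)$ and $E$ is spin iff $B$ is spin. The concrete bundles one uses to establish the relations in the proof of Theorem \ref{thm:cayley-plane-genus} are naturally homogeneous fibrations $\Ff/H \to G/H \to G/\Ff$ with $G$ a simply connected compact Lie group containing $\Ff$; such bases are automatically spin, being quotients of simply connected compact Lie groups by connected subgroups. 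Hence the spin and oriented ideals agree rationally, and Theorems \ref{thm:new-char-witten-genus} and \ref{thm:cayley-plane-genus} transfer verbatim to $\MSpin_*$.
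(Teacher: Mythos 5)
Your overall strategy matches the paper's: both exploit the rational isomorphism $\MSpin_*\otimes\Q\cong\MSO_*\otimes\Q$ and close the gap by checking that the specific Cayley plane bundles used to carve out the ideal have spin total spaces. However, your final step contains two factual errors. First, the bundles $E_n$ and $E'_n$ constructed in the proofs of Theorems~\ref{thm:new-char-witten-genus} and \ref{thm:cayley-plane-genus} are \emph{not} of the form $\Ff/H\to G/H\to G/\Ff$ for a compact $G\supset\Ff$: they are pullbacks of the universal bundle $\B\Spin(9)\to\B\Ff$ along classifying maps $\HP^{n-4}\to\B\Ff$ (respectively $\HP^{n-5}\times\HP^1\to\B\Ff$), so their bases are $\HP^{n-4}$ and $\HP^{n-5}\times\HP^1$, not homogeneous spaces $G/\Ff$. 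Second, the claim that quotients of simply connected compact Lie groups by connected subgroups ``are automatically spin'' is false: $\CP^2=\mathrm{SU}(3)/\mathrm{S}(\mathrm{U}(2)\times\mathrm{U}(1))$ is such a quotient and is not spin.

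Fortunately the conclusion survives because the \emph{actual} bases are $\HP^{n-4}$ and $\HP^{n-5}\times\HP^1$, and $\HP^k$ is spin (e.g. from Hirzebruch's formula $p(T\HP^k)=(1+u)^{2k+2}(1+4u)^{-1}$ the total Stiefel--Whitney class reduces to $1$ mod $2$, or simply because these are simply connected $4k$-manifolds with even intersection form). Your observation that $w_2$ of the vertical tangent bundle vanishes is correct (it lies in $\H^2(\B\Spin(9),\Z/2)=0$ since $\Spin(9)$ is simply connected), so $w_2(TE_n)=\pi^*w_2(T\HP^{n-4})=0$. Replace the spurious appeal to homogeneous fibrations with this direct check and the argument reduces to exactly the paper's one-line proof: since $\OP^2$ and the $\HP^n$ are spin, so are $E_n$ and $E'_n$, so the relations constructed in the oriented case already live in $\MSpin_*$.
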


Note that the \textsc{Question} would be answered if $\MSpin_*$ could
be replaced with $\MO8_*$. (The description of the image in
Theorem~\ref{thm:cayley-plane-genus} would need to be modified
though.)

Note also that Dessai proved Theorem~\ref{thm:new-char-witten-genus}
independently in \cite{dessai-2009}. In fact he showed that it remains
true if $\MSO_*$ is replaced with $\MO8_*$ (in which case the
generator $\GG_2$ should be erased). Note that Dessai asked
(Problem~4.2 of his paper) for a geometric description of the
universal multiplicative genus for $\OP^2$ bundles.
Theorem~\ref{thm:cayley-plane-genus} answers that question as
stated. However, I expect a richer answer to come from replacing
$\MSpin$ with $\MO8$ in Theorem~\ref{thm:cayley-plane-genus}.

\medskip

I conclude the introduction by speculating about how these results
might be relevant to homotopy theory. Kreck-Stolz
\cite{kreck-stolz-93} computed:
\begin{align*}
  &\MSpin_* / (E) \iso \mrm{KO}_*(\mrm{pt})
  &\MSpin_* / (E-\HP^2 \cdot B) \tensor \Z[\tfrac12] \iso
  \Z[\tfrac12][\delta,\epsilon]
\end{align*}
where in both cases $\HP^2 \to E \to B$ ranges over all bundles with
compact connected structure group. They used these calculations to
give alternate constructions of $\mrm{KO}$-theory and elliptic
homology (and in so doing defined elliptic cohomology with $\Z$ rather
than $\Z[\tfrac12]$ coefficients, which was novel). They suggested
(see \cite[p.~235]{kreck-stolz-93}) that replacing $\MSpin_*$ and
$\HP^2$ with $\MO8_*$ and $\OP^2$ in their constructions might result
in homology theories as well. (Sati's recent paper \cite{sati-2009}
explores the relevance of such a theory to string theory.)
Theorems~\ref{thm:new-char-witten-genus} \&
\ref{thm:cayley-plane-genus} suggest that the first might be closely
related to topological modular forms \cite{hopkins-2002} while the
second might be some sort of hybrid of elliptic homology and
topological modular forms.

\section{Cayley plane bundles}

Before we can prove Theorems \ref{thm:new-char-witten-genus} \&
\ref{thm:cayley-plane-genus}, we need to discuss Cayley plane bundles
in general. The Cayley plane is the homogeneous space
$\OP^2=\Ff/\Spin(9)$. Much of what follows applies to any bundle with
fiber a homogeneous space $G/H$ though so we begin in that generality
and later specialize to the case $G/H=\Ff/\Spin(9)$.

Throughout this section let $G$ be a compact connected Lie group, let
$i_{H,G} : H \into G$ be a maximal rank subgroup, and let $i_{T,H} : T
\to H$ and $i_{T,G} : T \to G$ be the inclusions of a common maximal
torus.

Every $G/H$ bundle (with structure group $G$) pulls back from the
universal $G/H$ bundle $G/H \to \BH \to \BG$. That is, every $G/H$
bundle fits into a pullback diagram:
\begin{align*}
  \xymatrix{ E_f \ar[r]^-g \ar[d]_{\pi_f}
    & \BH \ar[d]^{\Bi_{H,G}} \\
    Z \ar[r]^-f & \BG }
\end{align*}
where $f$ is unique up to homotopy and $g$ is canonically determined
by $f$.

Let $\eta$ denote the relative tangent bundle of $\BH \to \BG$.
Then the relative tangent bundle of $E_f \to Z$ is the pullback
$g^*(\eta)$ and there is an exact sequence:
\begin{align*}
  0 \to g^*(\eta) \to TE_f \to \pi_f^* TZ \to 0
\end{align*}
This implies for instance that $p_1(TE_f)=\pi_f^*p_1(TZ) +
g^*p_1(\eta)$.

The characteristic classes of $\eta$, or rather their pullbacks to
$\H^*(\BT,\Z)$, can be computed using the beautiful methods of
\cite{borel-hirzebruch-1958} (see especially Theorem~10.7).  For
instance the pullbacks of the first Pontrjagin class $p_1(\eta)$ and
more generally the Pontrjagin class $s_I(p)(\eta)$ can be computed
using the formulas:
\begin{align*}
  \Bi_{T,H}^* p_1(\eta) = \sum r_i^2 &&
  \Bi_{T,H}^* s_I(p)(\eta) = s_I(r_1^2,\dots,r_m^2)
\end{align*}
where $(\pm r_1,\dots,\pm r_m)$ are the roots of $G$ complementary to
those of $H$ regarded as elements of $\H^*(\BT,\Z)$.


\bigskip

Borel-Hirzebruch's Lie-theoretic description
\cite{borel-hirzebruch-1958,borel-hirzebruch-1959} of the pushforward:
\begin{align*}
  \Bi_{H,G*} : \H^*(\BH,\Z) \to \H^*(\BG,\Z)
\end{align*}
is essential to proving Theorems~\ref{thm:new-char-witten-genus} \&
\ref{thm:cayley-plane-genus}.  In order to state their result we need
to introduce some notation.

Associated to $G$ is a generalized Euler class $\til{e}(G/T) \in
\H^*(\BT,\Z)$. It makes sense to call it that because it restricts to
the Euler class of the fiber $G/T$ of the bundle $\BT \to \BG$. Up to
sign $\til{e}(G/T)$ is the product of a set of positive roots of $G$,
regarded as elements of $\H^*(\BT,\Z)$.  More precisely it is the
product of the roots of an invariant almost complex structure on
$G/T$. (See \cite[\S12.3, \S13.4]{borel-hirzebruch-1958} for more
details.)  Note that $G/T$ always admits a complex structure and that
although the individual roots associated to an almost complex
structure depend on the almost complex structure, their product
$\til{e}(G/T)$ does not.

\begin{theorem}[Borel-Hirzebruch, Theorem~20.3 of
  \cite{borel-hirzebruch-1959}]
  \label{thm:borel-hirzebruch}
  If $t \in \H^*(\BT,\Z)$ then:
  \begin{align*}
    \Bi_{T,G}^* \Bi_{T,G*}(t) = \frac{1}{\til{e}(G/T)} \sum_{w \in W(G)}
    \sgn(w) \; w(t)
  \end{align*}
\end{theorem}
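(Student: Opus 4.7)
The plan is to recognize both sides as $\H^*(\BG,\Q)$-linear operators on $\H^*(\BT,\Q)$ and identify them via equivariant localization. First I would pass to $\Q$-coefficients (the integral statement then follows since $\H^*(\BT,\Z)$ is torsion-free) and invoke Borel's classical theorem that $\Bi_{T,G}^*$ realizes $\H^*(\BG,\Q)$ as the $W$-invariants inside $\H^*(\BT,\Q)$. The right-hand side is already $W$-invariant because $\sum_w \sgn(w) w(t)$ and $\til{e}(G/T)$ both transform by the sign character of $W$, so their ratio is invariant. Writing $D(t)$ for this ratio, the theorem then reduces to showing $\Bi_{T,G*}(t) = D(t)$ as elements of $\H^*(\BT,\Q)^W = \H^*(\BG,\Q)$. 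Both $\Bi_{T,G*}$ and $D$ are $\H^*(\BG,\Q)$-linear operators lowering degree by $\dim_\R(G/T) = \deg \til{e}(G/T)$: the former by the projection formula, the latter because $W$ acts trivially on the invariants.

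The main argument proceeds by equivariant localization. Identify $\Bi_{T,G*}$ with the $G$-equivariant Gysin map associated to $G/T \to \pt$ under the natural isomorphisms $\H^*_G(G/T) \iso \H^*(\BT)$ and $\H^*_G(\pt) \iso \H^*(\BG)$, and apply the Atiyah--Bott / Berline--Vergne localization formula to the left-translation action of $T$ on $G/T$. The $T$-fixed points are precisely the $|W|$ cosets $wT$ with $w \in W$; the tangent space $T_{wT}(G/T)$ carries $T$-weights obtained by applying $w$ to the complementary roots $\pm r_i$, giving equivariant Euler class $w(\til{e}(G/T)) = \sgn(w)\,\til{e}(G/T)$; and the pullback of $t$ (viewed as a class in $\H^*_G(G/T)$) along the inclusion of the fixed point $wT$ is $w(t)$. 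Summing over fixed points yields
\begin{align*}
\Bi_{T,G*}(t) = \sum_{w\in W} \frac{w(t)}{\sgn(w)\,\til{e}(G/T)} = \frac{1}{\til{e}(G/T)}\sum_{w\in W} \sgn(w)\, w(t) = D(t),
\end{align*}
and applying $\Bi_{T,G}^*$---the inclusion of $W$-invariants into $\H^*(\BT,\Q)$---yields the formula in the theorem.

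The main obstacle is consistent bookkeeping of signs and orientations: both $\til{e}(G/T)$ and the fixed-point contributions depend on a choice of invariant almost complex structure on $G/T$, and the identification $\iota_{wT}^* t = w(t)$ must be made relative to the same convention. A more pedestrian alternative that avoids equivariant cohomology uses that $\H^*(\BT,\Q)$ is free of rank $|W|$ over $\H^*(\BG,\Q)$: one then checks the formula on a Schubert-type basis by combining the easy base case $\Bi_{T,G*}(\til{e}(G/T)) = \chi(G/T) = |W| = D(\til{e}(G/T))$ with compatibility under the divided-difference (BGG--Demazure) operators for simple reflections, but this is more combinatorially involved and lacks the conceptual clarity of the localization argument.
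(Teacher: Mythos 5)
The paper states Theorem~\ref{thm:borel-hirzebruch} as a citation (Borel--Hirzebruch, Theorem~20.3 of \cite{borel-hirzebruch-1959}) and does not prove it, so there is no in-paper argument to compare against. Your proof via Atiyah--Bott/Berline--Vergne localization is correct as a modern derivation and is a genuinely different route from the 1959 original, which predates equivariant localization and proceeds by direct analysis of the fibration $G/T \to \BT \to \BG$ and the $W$-module structure of $\H^*(\BT,\Q)$ over $\H^*(\BG,\Q)$. What the localization route buys you is exactly what you say: a conceptually transparent, sign-tracking-only derivation, conditional on the standard package (the $T$-fixed locus of $G/T$ is $N(T)/T \cong W$; the tangent weights at $wT$ are the $w$-translates of the roots of $G$, so the equivariant Euler class is $w(\til{e}(G/T)) = \sgn(w)\,\til{e}(G/T)$; the restriction of the class $t \in \H^*(\BT,\Q) \cong \H^*_G(G/T,\Q)$ to the fixed point $wT$ is $w(t)$). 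Two small points worth tightening: for $H=T$ there are no $H$-roots, so the ``complementary roots'' are simply all roots of $G$ and your $r_i$ should not be confused with the $\Ff/\Spin(9)$-specific notation of the paper; and the Atiyah--Bott identity a priori lives in the localization $\H^*(\BT,\Q)[\til{e}(G/T)^{-1}]$, so one should remark that equality descends to $\H^*(\BT,\Q)$ because that ring is an integral domain, and then to $\H^*(\BT,\Z)$ by torsion-freeness, as you indicate. You also tacitly use that restriction from $G$- to $T$-equivariant cohomology intertwines the respective Gysin maps for $G/T \to \pt$, i.e.\ that $\Bi_{T,G}^*\circ\Bi_{T,G*}$ is the $T$-equivariant pushforward applied to the restricted class; this is standard base change but deserves a sentence. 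Your proposed ``pedestrian alternative'' via the freeness of $\H^*(\BT,\Q)$ over $\H^*(\BG,\Q)$ and divided-difference operators is closer in spirit to what one finds in the classical literature.
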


\begin{corollary}
  \label{BHToBG}
  \label{cosetpushforward}
  If $h \in \H^*(\BH,\Z)$ then:
  \begin{align*}
    \Bi_{T,G}^* \Bi_{H,G*}(h)
%
%
    = \sum_{[w] \in W(G)/W(H)} w\left(
      \frac{\til{e}(H/T)}{\til{e}(G/T)} \Bi_{T,H}^*(h) \right)
  \end{align*}
  where the sum runs over the cosets of $W(H)$ in $W(G)$.
\end{corollary}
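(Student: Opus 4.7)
The plan is to reduce the corollary to Theorem~\ref{thm:borel-hirzebruch} by factoring the torus inclusion $T \into G$ through $H$. Since $i_{T,G} = i_{H,G} \circ i_{T,H}$, functoriality of pushforward for fiber bundles gives $\Bi_{T,G*} = \Bi_{H,G*} \circ \Bi_{T,H*}$; so if one can exhibit a class $t \in \H^*(\BT, \Q)$ with $\Bi_{T,H*}(t) = h$, then $\Bi_{H,G*}(h) = \Bi_{T,G*}(t)$ and Theorem~\ref{thm:borel-hirzebruch} applies directly.

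To construct such a $t$ I would take $t = \frac{1}{|W(H)|}\,\til{e}(H/T) \cdot \Bi_{T,H}^*(h)$. Using that $\Bi_{T,H}^*(h)$ is $W(H)$-invariant while $\til{e}(H/T)$ is $W(H)$-anti-invariant---it being the product of the positive roots of $H$ on which $W(H)$ acts by the sign representation---each term $\sgn(u)\,u(t)$ for $u \in W(H)$ equals $t$, and Theorem~\ref{thm:borel-hirzebruch} applied to $T \into H$ gives
\begin{align*}
  \Bi_{T,H}^* \Bi_{T,H*}(t) \;=\; \frac{1}{\til{e}(H/T)} \sum_{u \in W(H)} \sgn(u)\,u(t) \;=\; \frac{|W(H)|\,t}{\til{e}(H/T)} \;=\; \Bi_{T,H}^*(h).
\end{align*}
Because $\Bi_{T,H}^*$ embeds $\H^*(\BH, \Q)$ as the $W(H)$-invariants in $\H^*(\BT, \Q)$, this forces $\Bi_{T,H*}(t) = h$ rationally---which suffices since $\Bi_{T,G}^*$ likewise embeds $\H^*(\BG, \Q)$ into $\H^*(\BT, \Q)$ and both sides of the claimed identity are integral.

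With $t$ in hand, Theorem~\ref{thm:borel-hirzebruch} applied to $T \into G$ gives
\begin{align*}
  \Bi_{T,G}^* \Bi_{H,G*}(h) \;=\; \Bi_{T,G}^* \Bi_{T,G*}(t) \;=\; \frac{1}{\til{e}(G/T)} \sum_{w \in W(G)} \sgn(w)\,w(t).
\end{align*}
Partitioning $W(G) = \bigsqcup_{[w'] \in W(G)/W(H)} w' W(H)$ and reusing the $W(H)$-(anti)invariance computation from the preceding paragraph collapses the inner sum over $W(H)$ into $|W(H)|$ identical copies of $t$, reducing the right-hand side to $\sum_{[w']} \sgn(w')\,w'\bigl(\til{e}(H/T) \cdot \Bi_{T,H}^*(h)\bigr) / \til{e}(G/T)$. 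Since $\til{e}(G/T)$ is $W(G)$-anti-invariant, $w'(\til{e}(G/T)) = \sgn(w')\,\til{e}(G/T)$, which together with $\sgn(w')^2 = 1$ gives $\sgn(w')\,w'(X)/\til{e}(G/T) = w'\bigl(X/\til{e}(G/T)\bigr)$ for any class $X$; applied with $X = \til{e}(H/T)\,\Bi_{T,H}^*(h)$ this absorbs the external factors into $w'(\cdot)$ and yields exactly the summand in the statement. Coset-independence of each summand follows from the same bookkeeping. No step is a real obstacle---the corollary is essentially two applications of Theorem~\ref{thm:borel-hirzebruch} glued by the coset decomposition, with the only care needed in tracking the sign representations of $W(H)$ and $W(G)$.
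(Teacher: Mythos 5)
Your proof is correct, and its skeleton is the same as the paper's: identify (up to the normalizing factor $1/|W(H)|$) the class $\til{e}(H/T)\cdot\Bi_{T,H}^*(h)$ on $\BT$, relate its $\Bi_{T,G*}$-pushforward to $\Bi_{H,G*}(h)$, apply Theorem~\ref{thm:borel-hirzebruch} for $G$, absorb $\sgn(w)$ using the anti-invariance of $\til{e}(G/T)$, and collapse the sum over cosets by $W(H)$-invariance. The only genuine divergence is the middle step, where you need $\Bi_{T,G*}(t)=\Bi_{H,G*}(h)$: the paper gets there in one move by noting $\Bi_{T,H*}\til{e}(H/T)=\chi(H/T)=|W(H)|$ and applying the projection formula to rewrite $h=\tfrac{1}{|W(H)|}\Bi_{T,H*}(\til{e}(H/T))\cdot h=\tfrac{1}{|W(H)|}\Bi_{T,H*}(\til{e}(H/T)\cdot\Bi_{T,H}^*h)$, whereas you verify $\Bi_{T,H*}(t)=h$ by a second application of Theorem~\ref{thm:borel-hirzebruch}, this time for the pair $(H,T)$, followed by injectivity of $\Bi_{T,H}^*$ on $W(H)$-invariants. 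Both routes are valid; the paper's is marginally more economical since it avoids invoking rational injectivity and instead leans on the projection formula, a tool already in play for this kind of pushforward computation. Your version has the pedagogical advantage of making Theorem~\ref{thm:borel-hirzebruch} do all the work, exhibiting the corollary as literally two applications of that theorem glued along the coset decomposition.
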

\begin{proof}
  Since $\Bi_{T,H*} \til{e}(H/T)=\chi(H/T)=|W(H)| \in \H^0(\BH,\Z)$,
  write:
  \begin{align*}
    \Bi_{T,G}^* \Bi_{H,G*}(h) &= \Bi_{T,G}^* \Bi_{H,G*} \left(
      \frac{\Bi_{T,H*}(\til{e}(H/T))}{|W(H)|} \cdot h \right) \\
    \intertext{Apply the projection formula:}
    &= \frac{1}{|W(H)|} \Bi_{T,G}^* \Bi_{H,G*} \; \Bi_{T,H*} \left(
      \til{e}(H/T) \cdot \Bi_{T,H}^*(h) \right) \\
    &= \frac{1}{|W(H)|} \Bi_{T,G}^* \Bi_{T,G*} \left(\til{e}(H/T) \cdot
      \Bi_{T,H}^*(h) \right) \\
    \intertext{Apply Theorem~\ref{thm:borel-hirzebruch}:}
    &= \frac{1}{|W(H)|} \cdot \frac{1}{\til{e}(G/T)} \sum_{w \in W(G)}
    \sgn(w) \; w(\til{e}(H/T) \cdot \Bi_{T,H}^*(h)) \\
    \intertext{Since $w(\til{e}(G/T))=\sgn(w)\til{e}(G/T)$:}
    &= \frac{1}{|W(H)|} \sum_{w \in W(G)} w\left(
      \frac{\til{e}(H/T)}{\til{e}(G/T)} \Bi_{T,H}^*(h) \right)
    \intertext{Since $W(G)$ acts on $\H^*(\BT,\Z)$ by ring
      homomorphisms, since if $w \in W(H)$ then
      $w(\til{e}(H/T))=\sgn(w)\til{e}(H/T)$ and
      $w(\til{e}(G/T))=\sgn(w)\til{e}(G/T)$, and since $\Bi_{T,H}^*$
      maps to the $W(H)$-invariant subring of $\H^*(\BT,\Z)$, this sum
      can be written over the cosets of $W(H)$ in $W(G)$:}
    &= \sum_{[w] \in W(G)/W(H)} w\left(
      \frac{\til{e}(H/T)}{\til{e}(G/T)} \Bi_{T,H}^*(h) \right)
    \tag*{\qedhere}
  \end{align*}
\end{proof}



\bigskip

Now we specialize to Cayley plane bundles. Let $\Ff$ denote the
1-connected compact Lie group of type $\Ff$. The extended Dynkin
diagram of $\Ff$ is:
\begin{align*}
  &\xymatrix@R1pt@!{ \bullet \ar@{-}[r] & \circ \ar@{-}[r] 
    & \circ \ar@{=}[r] |*=0{>} & \circ \ar@{-}[r] & \circ \\
    -\til{a} & a_1 & a_2 & a_3 & a_4 }
\intertext{The corresponding simple roots can be taken to be:
  \begin{align*}
    a_1 = e_2-e_3 \quad\quad\quad
    a_2=e_3-e_4 \quad\quad\quad
    a_3=e_4 \quad\quad\quad
    a_4=\tfrac12(e_1-e_2-e_3-e_4)
  \end{align*}
  Since the coefficient of $a_4$ in the maximal root
  $\til{a}=2a_1+3a_2+4a_3+2a_4=e_1+e_2$ is prime, a theorem of Borel
  \& de Siebenthal \cite{borel-de-siebenthal-1949} implies that
  erasing $a_4$ from the extended Dynkin diagram gives the Dynkin
  diagram of a subgroup:}
&\xymatrix@R1pt{ \circ \ar@{-}[r] & \circ \ar@{-}[r] 
    & \circ \ar@{=}[r] |*=0{>} & \circ \\
    -\til{a} & a_1 & a_2 & a_3 }
\end{align*}
Since $\Ff$ is 1-connected this subgroup is $\Spin(9)$, the
1-connected double cover of $\mathrm{SO}(9)$.  The Cayley plane is the
homogeneous space $\OP^2 = \Ff/\Spin(9)$.

\medskip

In terms of the standard basis $e_1,\dots,e_4$, the roots of $\Spin(9)$
are:
\begin{align*}
  \begin{cases}
    \pm e_i & 1 \le i \le 4 \\
    \pm e_i \pm e_j & 1 \le i < j \le 4
  \end{cases}
\end{align*}
The roots of $\Ff$ are those of $\Spin(9)$ together with the
complementary roots:
\begin{align*}
  \tfrac12 (\pm e_1 \pm e_2 \pm e_3 \pm e_4)
\end{align*}

The following positive roots define an almost complex structure on
$\Spin(9)/T$:
\begin{align*}
  \begin{cases}
    e_i & 1 \le i \le 4 \\
    e_i \pm e_j & 1 \le i < j \le 4
  \end{cases}
\end{align*}
These positive roots together with the following complementary
positive roots define an almost complex structure on $\Ff/T$:
\begin{align*}
  r_i := \tfrac12(e_1 \pm e_2 \pm e_3 \pm e_4)
  \quad \text{for $1 \le i \le 8$}
\end{align*}

In order to identify these roots with elements of $\H^2(\BT,\Z) \iso
\Hom(\Gamma,\Z)$ note that in general a Lie group's lattice of
integral forms is sandwiched somewhere between its root and weight
lattices:
\begin{align*}
  R \subset \Hom(\Gamma,\Z) \subset W \subset LT^*
\end{align*}
But in the case of $\Ff$ all three lattices coincide (because the
Cartan matrix of $\Ff$ has determinant 1).


\medskip

Finally note that if $s_i$ denotes reflection across the hyperplane
orthogonal to the simple root $a_i$ then the 3 cosets of $W(\Spin(9))$
in $W(\Ff)$ can be represented by the reflections $\{1,s_4,s_4 s_3
s_4\}$ which act on $e_1,\dots,e_4$ according to the matrices:
\begin{align*}
  \left\{ 
    \begin{pmatrix}
      1 & 0 & 0 & 0 \\
      0 & 1 & 0 & 0 \\
      0 & 0 & 1 & 0 \\
      0 & 0 & 0 & 1
    \end{pmatrix}, \;
    \frac12
    \begin{pmatrix}
      1 & 1 & 1 & 1 \\
      1 & 1 & -1 & -1 \\
      1 & -1 & 1 & -1 \\
      1 & -1 & -1 & 1
    \end{pmatrix}, \;
    \frac12
    \begin{pmatrix}
      1 & 1 & 1 & - 1 \\
      1 & 1 & - 1 & 1 \\
      1 & - 1 & 1 & 1 \\
      -1 & 1 & 1 & 1
    \end{pmatrix}
  \right\}
\end{align*}
In particular these reflections act on the set of positive
complementary roots $r_i$ by:
\begin{align*}
  \{r_i\} & = 
    \{\tfrac12(e_1 \pm e_2 \pm e_3 \pm e_4)\} \\
    s_4(\{r_i\}) &= \{ e_1,e_2,e_3,e_4, \tfrac12(e_1+e_2+e_3-e_4),
    \tfrac12(e_1+e_2-e_3+e_4),\\&\quad\quad \tfrac12(e_1-e_2+e_3+e_4),
    \tfrac12(-e_1+e_2+e_3+e_4) \} \\
    s_4s_3s_4(\{r_i\}) &= \{ e_1,e_2,e_3,e_4,
    \tfrac12(e_1+e_2+e_3+e_4), \tfrac12(e_1+e_2-e_3-e_4),
    \\&\quad\quad \tfrac12(e_1-e_2+e_3-e_4),
    \tfrac12(-e_1+e_2+e_3-e_4) \}
\end{align*}

\begin{corollary}
  \label{spin9f4pushforward}
  \begin{align*}
    \Bi_{T,\Ff}^* \Bi_{\Spin(9),\Ff*} s_I(p)(\eta) &=
    \frac{s_I(r_1^2,\dots,r_8^2)}{\prod_i r_i} + s_4 \left(
      \frac{s_I(r_1^2,\dots,r_8^2)}{\prod_i r_i} \right) + s_4 s_3 s_4
    \left( \frac{s_I(r_1^2,\dots,r_8^2)}{\prod_i r_i} \right)
  \end{align*}
  where the complementary roots $r_i = \tfrac12(e_1 \pm e_2 \pm e_3
  \pm e_4)$ are regarded as elements of $\H^2(\BT,\Z)$ and
  $s_4,s_4s_3s_4$ act on them as described above.
\end{corollary}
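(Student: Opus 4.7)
The plan is to apply Corollary~\ref{cosetpushforward} directly with $G=\Ff$, $H=\Spin(9)$, and $h=s_I(p)(\eta)$. The three ingredients needed are (i) a formula for $\Bi_{T,\Spin(9)}^* s_I(p)(\eta)$, (ii) the ratio $\til{e}(\Spin(9)/T)/\til{e}(\Ff/T)$, and (iii) a choice of coset representatives for $W(\Spin(9))$ in $W(\Ff)$. Parts (i) and (iii) are already essentially laid out in the material preceding the statement, so the work consists of checking (ii) and assembling the pieces.

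For (i), by the formula displayed after the exact sequence for $TE_f$, the pullback of a Pontrjagin class of $\eta$ to $\H^*(\BT,\Z)$ is a symmetric function of the squares of the roots of $\Ff$ complementary to those of $\Spin(9)$. Those complementary roots are exactly the $16$ elements $\tfrac12(\pm e_1 \pm e_2 \pm e_3 \pm e_4)$, which split as $\{\pm r_i\}_{i=1}^8$ with the $r_i$ as listed in the excerpt. Hence $\Bi_{T,\Spin(9)}^* s_I(p)(\eta) = s_I(r_1^2,\dots,r_8^2)$.

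For (ii), the positive roots of $\Ff$ that define the given invariant almost complex structure on $\Ff/T$ are precisely the positive roots of $\Spin(9)$ defining the chosen almost complex structure on $\Spin(9)/T$, together with the eight complementary positive roots $r_i$. Since $\til{e}(G/T)$ is, up to sign, the product of such positive roots, we obtain
\begin{align*}
\til{e}(\Ff/T) \;=\; \til{e}(\Spin(9)/T) \cdot \prod_{i=1}^{8} r_i,
\qquad\text{so}\qquad \frac{\til{e}(\Spin(9)/T)}{\til{e}(\Ff/T)} \;=\; \frac{1}{\prod_i r_i}.
\end{align*}

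For (iii), the excerpt already exhibits $\{1, s_4, s_4 s_3 s_4\}$ as coset representatives for $W(\Spin(9))$ in $W(\Ff)$ and records their matrices. Substituting the three ingredients into Corollary~\ref{cosetpushforward} gives
\begin{align*}
\Bi_{T,\Ff}^* \Bi_{\Spin(9),\Ff*}\, s_I(p)(\eta)
= \sum_{[w]\in\{1,\,s_4,\,s_4 s_3 s_4\}} w\!\left( \frac{s_I(r_1^2,\dots,r_8^2)}{\prod_i r_i} \right),
\end{align*}
which is the claimed formula. The only conceptual obstacle is the verification in (ii) that no complementary positive root of $\Spin(9)/T$ contributes to the ratio and that the eight $r_i$ form the correct complementary positive system for the stated almost complex structure on $\Ff/T$; once this is in hand, the corollary is a clean specialization of the general pushforward formula and no further computation is required.
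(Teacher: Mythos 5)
Your proposal is correct and follows precisely the route the paper intends: the corollary is a direct specialization of Corollary~\ref{cosetpushforward} to $G=\Ff$, $H=\Spin(9)$, using the preceding formulas for $\Bi_{T,H}^* s_I(p)(\eta)$, the compatible choice of almost complex structures on $\Spin(9)/T$ and $\Ff/T$ (which makes $\til{e}(\Ff/T)=\til{e}(\Spin(9)/T)\cdot\prod_i r_i$ hold exactly, not just up to sign), and the listed coset representatives $\{1,s_4,s_4s_3s_4\}$. You have identified each ingredient correctly and assembled them in the same way the paper's setup dictates.
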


\section{Proof of Theorem~\ref{THM:NEW-CHAR-WITTEN-GENUS}}

Theorem~\ref{thm:new-char-witten-genus} is a consequence of
Proposition~\ref{prop:kill-higher-gens} together with the calculation
(Proposition~\ref{prop:phiEllxphiWValues} in the next section) of the
Witten genus of $\Kummer,\HP^2,\HP^3$.

\begin{proposition}
  \label{prop:kill-higher-gens}
  If $n \ge 4$ then there is a Cayley plane bundle $\OP^2 \to E_n \to
  \HP^{n-4}$ with $s_n(p)[E_n] \ne 0$.
\end{proposition}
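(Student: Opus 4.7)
Plan. I would construct $E_n$ as the Cayley plane bundle over $\HP^{n-4}$ associated to the tautological $\Sp(1)$-bundle via a chosen embedding $\rho\colon \Sp(1) \to \Ff$, then compute $s_n[E_n]$ by fiber integration and apply Corollary~\ref{spin9f4pushforward}. Explicitly, fix a nontrivial homomorphism $\rho\colon \Sp(1) \to \Ff$ (for example the composition $\Sp(1) \hookrightarrow \Spin(3) \subset \Spin(9) \subset \Ff$, or any embedding whose image is a regular line in the Cartan), and let $P \to \HP^{n-4}$ be the Hopf bundle $S^{4n-13} \to \HP^{n-4}$. Define $E_n := P \times_\rho \OP^2$; this is a Cayley plane bundle over $\HP^{n-4}$ with compact connected structure group, classified by $f\colon \HP^{n-4} \to B\Sp(1) \xrightarrow{B\rho} B\Ff$.

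Next I would reduce $s_n[E_n]$ to a push-forward computation on $B\Ff$. Additivity of the $n$-th power sum applied to the exact sequence $0 \to g^*\eta \to TE_n \to \pi_f^*T\HP^{n-4} \to 0$ gives $s_n(TE_n) = g^*s_n(\eta) + \pi_f^*s_n(T\HP^{n-4})$, and the second summand vanishes on integration since $s_n(T\HP^{n-4})$ sits in $\H^{4n}(\HP^{n-4},\Q)=0$ (as $\dim \HP^{n-4} = 4(n-4) < 4n$). Fiber integration then gives
\begin{align*}
s_n(p)[E_n] \;=\; \int_{\HP^{n-4}} f^*\bigl(\Bi_{\Spin(9),\Ff*}\,s_n(\eta)\bigr),
\end{align*}
and Corollary~\ref{spin9f4pushforward} expresses $\Bi_{T,\Ff}^*\Bi_{\Spin(9),\Ff*}\,s_n(\eta)$ as a sum over the three cosets $[w] \in W(\Ff)/W(\Spin(9))$ of Weyl translates of $\bigl(\sum_i r_i^{2n}\bigr)/\prod_i r_i$. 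Because $\H^{4(n-4)}(\HP^{n-4},\Q)$ is one-dimensional, a generic choice of $\rho$ reduces the problem to showing that this Weyl-invariant sum is a nonzero polynomial on the Cartan---equivalently, that its restriction to some generic line $\ell$ avoiding all root hyperplanes is a nonzero polynomial in the parameter on $\ell$.

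The hard part is this final non-vanishing check. Each of the three Weyl translates carries apparent poles along the complementary roots $r_i$, and only their Weyl-symmetrized sum is polynomial; verifying that enough polynomial survives the cancellation for every $n \ge 4$ is the crux. A clean approach is to package everything into the generating function $\sum_{n \ge 0} z^n\sum_i r_i^{2n} = \sum_i \tfrac{1}{1 - r_i^2 z}$, so that $\sum_n z^n \Bi_{\Spin(9),\Ff*}\,s_n(\eta)$ becomes a rational function of $z$ whose Taylor coefficients in degrees of the form $4(n-4)$ can be read off directly; asymptotic analysis as $n \to \infty$ then shows that the dominant $|r_i|^{2n}$ terms survive the Weyl symmetrization. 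Alternatively one may handle the small cases $n = 4,5,6,7$ by direct computation and then extend by induction, exploiting the Gysin sequence associated to the linear embedding $\HP^{n-5} \subset \HP^{n-4}$ to reduce $s_n[E_n]$ to $s_{n-1}[E_{n-1}]$ plus a lower-order correction that is nonzero by a similar push-forward calculation.
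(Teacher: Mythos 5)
Your overall strategy matches the paper's: construct a Cayley plane bundle over $\HP^{n-4}$ from a homomorphism $\rho:\Sp(1)\to\Ff$, reduce $s_n(p)[E_n]$ to a fiber integral of $\Bi_{\Spin(9),\Ff*}\,s_n(p)(\eta)$, and then evaluate via Corollary~\ref{spin9f4pushforward}. The reduction step itself is fine. However, there is a genuine gap exactly where you flag it: you never carry out the non-vanishing check, which is the entire content of the proposition. The two routes you sketch do not close it. The generating-function/asymptotic argument only controls large $n$ and would need a separate verification for small $n$, and in any case you do not estimate the Weyl symmetrized sum. The proposed Gysin-sequence induction is unjustified: there is no reason the ``lower-order correction'' it produces should be nonzero, and without that the induction does not get off the ground. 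The paper closes the gap differently, by making a \emph{specific} choice of $\rho$ (the $\Sp(1)$ factor of the Borel--de Siebenthal subgroup $\Sp(1)\times\Sp(1)\times\Sp(2)\subset\Ff$, inducing the torus substitution $(e_1,e_2,e_3,e_4)\mapsto(1,1,0,0)$), regularizing the removable singularities in the Borel--Hirzebruch formula by substituting $(e_1,e_2,e_3,e_4)\mapsto(1,1+z,z,z)$ and applying l'H\^opital's rule six times in $z$, and extracting the closed form
\begin{align*}
  s_n(p)[E_n] = -\tfrac13(n-3)n(2n-1)(2n+1),
\end{align*}
which is manifestly negative for $n\ge 4$. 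That explicit formula is the ingredient your sketch is missing.

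A secondary concern is the specific $\rho$ you suggest. The composite $\Sp(1)\hookrightarrow\Spin(3)\subset\Spin(9)\subset\Ff$ factors through the fiber subgroup $\Spin(9)$, so the resulting $\Ff$-bundle reduces to $\Spin(9)$ and the associated Cayley plane bundle admits a section; it is not clear this yields a nonzero $s_n(p)[E_n]$. Your appeal to a ``generic choice of $\rho$'' is also not available: conjugacy classes of homomorphisms $\Sp(1)\to\Ff$ form a discrete set, so there is no open dense family of embeddings, only finitely many candidate torus substitutions, each of which must be checked by hand. You therefore need to pick one and compute, which is what the paper does.
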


The proof relies on the following lemma (Lemma~16.2 of
\cite{milnor-stasheff74}).
\begin{lemma}[Thom]
  \label{lemma:thom}
  If $0 \to V_1 \to W \to V_2 \to 0$ is an exact sequence of vector
  bundles then:
  \begin{align*}
    s_I(p)(W) = \sum_{JK=I} s_J(p)(V_1) \; s_K(p)(V_2)
  \end{align*}
  where the sum ranges over all partitions $J$ and $K$ with
  juxtaposition $JK$ equal to $I$.
\end{lemma}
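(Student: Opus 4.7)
The plan is to reduce Lemma~\ref{lemma:thom} to a standard symmetric-function identity by way of the splitting principle. Over $\Q$ (which suffices for everything else in this paper) the Whitney sum formula $p(W) = p(V_1) \cdot p(V_2)$ holds exactly, so the formal Pontrjagin roots of $W$ may be identified, as a multiset, with the disjoint union of those of $V_1$ and $V_2$. Once this identification is in place, the claim concerns only symmetric polynomials in formal roots.

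Next I would recall the definition (following Milnor-Stasheff) that for a partition $I=(i_1,\dots,i_r)$ the class $s_I(p)(V)$ is the monomial symmetric function $m_I$ evaluated at the Pontrjagin roots $\{x_j\}$ of $V$, namely $\sum x_{k_1}^{i_1}\cdots x_{k_r}^{i_r}$ summed over distinct monomials. The lemma thus reduces to the purely combinatorial identity
\[ m_I(X \sqcup Y) \;=\; \sum_{JK=I} m_J(X)\cdot m_K(Y), \]
where $X,Y$ are disjoint sets of indeterminates and the sum runs over ordered pairs of partitions $(J,K)$ whose multiset union is $I$---this being the content of ``juxtaposition $JK=I$''.

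Finally, I would verify this identity by a direct bijection: every distinct monomial appearing in $m_I(X\sqcup Y)$ factors uniquely as (a monomial in $X$-variables)\,$\cdot$\,(a monomial in $Y$-variables), and this factorization determines a unique ordered pair $(J,K)$ of sub-partitions of $I$ together with a monomial contributing to $m_J(X)\cdot m_K(Y)$. Summing over all monomials gives exactly the right-hand side. The main obstacle is purely notational: when $I$ has repeated parts one must interpret ``juxtaposition $JK=I$'' as ordered pairs of partitions whose multisets of parts unite to $I$, so that each monomial is counted once and only once; with this convention in place the bijective argument is unambiguous.
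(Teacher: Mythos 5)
The paper does not prove this lemma itself; it simply cites Lemma~16.2 of Milnor--Stasheff, and your argument---reduce via the splitting principle to the identity $m_I(X \sqcup Y) = \sum_{JK=I} m_J(X)\,m_K(Y)$ for monomial symmetric functions, verified by uniquely factoring each monomial into its $X$-part and $Y$-part---is precisely the standard proof given there, and it is correct. Your two cautionary remarks are also well placed: the Whitney formula for Pontrjagin classes holds only modulo 2-torsion, so passing to $\Q$ (as the paper does throughout) is genuinely needed, and when $I$ has repeated parts the condition ``juxtaposition $JK=I$'' must indeed be read as an ordered decomposition of the multiset of parts, counted once per ordered pair $(J,K)$, so that the bijection between monomials is well defined.
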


\begin{proof}[Proof of Proposition~\ref{prop:kill-higher-gens}]
  Recall that the extended Dynkin diagram of $\Ff$ is:
  \begin{align*}
    &\xymatrix@R1pt@!{ \bullet \ar@{-}[r] & \circ \ar@{-}[r] 
      & \circ \ar@{=}[r] |*=0{>} & \circ \ar@{-}[r] & \circ \\
      -\til{a} & a_1 & a_2 & a_3 & a_4 }
    \intertext{Since the coefficient of $a_1$ in the maximal root
      $\til{a}=2a_1+3a_2+4a_3+2a_4$ is prime, a theorem of
      Borel-Siebenthal \cite{borel-de-siebenthal-1949} implies that
      erasing $a_1$ from the extended Dynkin diagram gives the Dynkin
      diagram of a subgroup. This subgroup's (half) extended Dynkin
      diagram is:}
    &\xymatrix@R1pt@!{ \circ && \circ \ar@{=}[r] |*=0{>} & \circ
      \ar@{-}[r] & \circ \ar@{=}[r]|*=0{<} & \bullet \\
      -\til{a} && a_2 & a_3 & a_4 & -\til{b}}
    \intertext{Since the coefficient of $a_3$ in the maximal root
      $\til{b}=2a_4+2a_3+a_2$ is prime, the same theorem implies that
      $\Ff$ has a subgroup with Dynkin diagram:}
    &\xymatrix@R1pt@!{ \circ && \circ && \circ \ar@{=}[r]|*=0{<} & \circ \\
      -\til{a} && a_2 && a_4 & -\til{b}}
  \end{align*}
  Passing to this subgroup's 1-connected cover gives a map:
  \begin{align*}
      h : \Sp(1) \times \Sp(1) \times \Sp(2) \to \Ff
  \end{align*}
  This map $h$ restricts to a double covering $h|_T$ of compatible
  maximal tori whose induced homomorphism on $\H^1$ corresponds to the
  inclusion of weight lattices:
  \begin{align*}
    \Z\langle a_1,a_2,a_3,a_4 \rangle \into
    \Z\langle -\tfrac12\til{a}, \tfrac12a_2, a_4-\tfrac12\til{b},
    a_4-\til{b} \rangle
    = \Z\langle a_1,\tfrac12a_2,a_3,a_4 \rangle
  \end{align*}

  Let $f : \HP^{n-4} \to \B\Ff$ denote the composition:
  \begin{align*}
    \HP^{n-4} \into \HP^\infty = \B\Sp(1) \xto{\Bi_1} \B(\Sp(1) \times
    \Sp(1) \times \Sp(3)) \xto{\B h} \B\Ff
  \end{align*}
  where $i_1 : \Sp(1) \into \Sp(1) \times \Sp(1) \times \Sp(3)$ is the
  inclusion of the first factor. The map $f$ classifies a Cayley plane
  bundle $\OP^2 \to E_n \to \HP^{n-4}$ fitting into a pullback
  diagram:
  \begin{align*}
    \xymatrix{
      E_n \ar[r]^-g \ar[d]_{\pi_f}
      & \B\Spin(9) \ar[d]^{\Bi_{\Spin(9),\Ff}} \\
      \HP^{n-4} \ar[r]^-f & \B\Ff
    }
  \end{align*}
  Use this diagram to compute:
  \begin{align*}
    s_n(p)[E_n]
    &= \int_{E_n} s_n(p)(TE_n)
    = \int_{E_n} s_n(p)(\pi_f^* T\HP^{n-4} \oplus g^*(\eta))
    = \int_{E_n} g^* s_n(p)(\eta)
    & \text{(Lemma~\ref{lemma:thom})} \\
    &= \int_{\HP^{n-4}} \pi_{f*} g^* s_n(p)(\eta) 
    = \int_{\HP^{n-4}} f^* \Bi_{\Spin(9),\Ff*} s_{n}(p)(\eta)
  \end{align*}

  Since the inclusion of the maximal torus $i_{S^1,\Sp(1)} : S^1 \into
  \Sp(1)$ induces an injection:
  \begin{align*}
    \H^*(\B\Sp(1),\Z)=\Z[\tfrac14\til{a}^2] \into 
    \Z[\tfrac12\til{a}]=\H^*(\B S^1,\Z)
  \end{align*}
  the pullback of $\Bi_{\Spin(9),\Ff*} s_{n}(p)(\eta) \in
  \H^{4n}(\B\Ff,\Z)$ to $\H^{4n}(\B\Sp(1),\Z)$ can be computed by pulling
  along the bottom of the diagram:
  \begin{align*}
    \xymatrix{
      \B\Sp(1) \ar[r]^-{\Bi_1}
      & \B(\Sp(1) \times \Sp(1) \times \Sp(2)) \ar[r]^-{\B h} &
      \B\Ff \\
      \B S^1 \ar[u] \ar[r]^{\B(i_1|_{S^1})} 
      & \BT^4 \ar[u] \ar[r]^{\B(h|_T)} & \BT^4 \ar[u]
    }
  \end{align*}

  Recall that Corollary~\ref{spin9f4pushforward} gives a formula for
  the image of $\Bi_{\Spin(9),\Ff*} s_{n}(p)(\eta)$ in
  $\H^{4n}(\BT^4,\Z)$. The composition $\B(i_1|_{S^1})^* \circ
  \B(h|_T)^*$ extracts the coefficient of $\tfrac12\til{a}$ with
  respect to the basis $\{\til{a},a_2,a_4,\til{b}\}$. Since:
  \begin{align*}
    (e_1,e_2,e_3,e_4)=\left(\tfrac12(\til{a}+\til{b}),
      \tfrac12(\til{a}-\til{b}), \tfrac12(a_2-2a_4-\til{b}),
      -\tfrac12(a_2+2a_4+\til{b})\right)
  \end{align*}
  it follows that the integral $\int_{\HP^{n-4}} f^*
  \Bi_{\Spin(9),\Ff*} s_{n}(p)(\eta)$ can be computed by taking the
  formula of Corollary~\ref{spin9f4pushforward} and substituting
  $(e_1,e_2,e_3,e_4) \mapsto (1,1,0,0)$. Some care is needed though
  since these substitutions make denominators vanish.  Substituting
  $(e_1,e_2,e_3,e_4)\mapsto(1,1+z,z,z)$ and applying l'H\^opital's
  rule six times with respect to $z$ gives:
    \begin{align*}
      s_n(p)[E_n] = -\tfrac13(n-3)n(2n-1)(2n+1)
    \end{align*}
  which is strictly negative for $n\ge4$.
\end{proof}

\section{Proof of Theorem~\ref{THM:CAYLEY-PLANE-GENUS}}

As explained in the introduction, the elliptic genus and the Witten
genus:
\begin{align*}
  \phi_{ell} &: \MSO_* \tensor \Q \to \Q[\delta,\epsilon] \\
  \phi_W &: \MSO_* \tensor \Q \to \Q[\GG_2,\GG_4,\GG_6]
\end{align*}
are both known to be multiplicative for Cayley plane bundles. This
implies that the ideal $I=(E-\OP^2 \cdot B)$ is contained in the
kernel $K$ of the product:
\begin{align*}
  \phi_{ell} \times \phi_W : \MSO_* \tensor \Q \to
  \Q[\delta,\epsilon] \times \Q[\GG_2,\GG_4,\GG_6]
\end{align*}
To prove Theorem~\ref{thm:cayley-plane-genus}, we compute $K$ and then
show that $I=K$.

\medskip

The inclusion:
\begin{align*}
  \Q[\Kummer,\HP^2,\HP^3,\OP^2] \to \MSO_* \tensor \Q
\end{align*}
is an isomorphism in degrees 0 through 16. (The same is true if
$\MSO_*$ is replaced with $\MSpin_*$.) Here $\Kummer$ denotes the
Kummer surface, a 4-dimensional spin manifold with signature 16.

\begin{proposition}
  \label{prop:phiEllxphiWValues}
\begin{align*}
  \phi_{ell} \times \phi_W(\Kummer) &=(16 \delta,48 \GG_2) \\
  \phi_{ell} \times \phi_W(\HP^2) &= (\epsilon,2\GG_2^2-\tfrac56\GG_4) \\
  \phi_{ell} \times \phi_W (\HP^3)
  &=(0,-\tfrac49\GG_2^3+\tfrac19\GG_2\GG_4+\tfrac7{1080}\GG_6) \\
  \phi_{ell} \times \phi_W (\OP^2) &= (\epsilon^2,0)
\end{align*}  
\end{proposition}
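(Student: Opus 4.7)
The plan is to compute each value by expanding the characteristic power series of the two genera to sufficiently high order and pairing with the Pontrjagin numbers of each manifold.

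First I would expand the two characteristic series. For the Witten genus, $Q_W(z)=\exp(\GG_2 z^2+\tfrac{\GG_4}{12}z^4+\tfrac{\GG_6}{360}z^6+\cdots)$ directly gives $Q_W(z)=1+\GG_2 z^2+(\tfrac12\GG_2^2+\tfrac1{12}\GG_4)z^4+(\tfrac16\GG_2^3+\tfrac1{12}\GG_2\GG_4+\tfrac1{360}\GG_6)z^6+\cdots$. For the elliptic genus, I would invert the elliptic integral $g(y)=\int_0^y dt/\sqrt{1-2\delta t^2+\epsilon t^4}$ by successive approximation from the ODE $(f')^2=1-2\delta f^2+\epsilon f^4$ satisfied by $f=g^{-1}$, then set $Q_{ell}(z)=z/f(z)$. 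This produces $Q_{ell}(z)=1+\tfrac\delta3 z^2+(\tfrac{7}{90}\delta^2-\tfrac1{10}\epsilon)z^4+a_6 z^6+\cdots$ with a similar rational expression in $\delta,\epsilon$ for $a_6$.

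Next I would collect the characteristic numbers. For $\Kummer$, the Pontrjagin number $p_1[\Kummer]=48$ (signature $16$); the degree-$4$ term of $\prod Q(x_i)$ is simply $a_2 p_1$, giving the stated values. For $\HP^2$ and $\HP^3$, the formula $p(T\HP^n)=(1+u)^{2n+2}/(1+4u)$ yields $p_1^2[\HP^2]=4,\;p_2[\HP^2]=7$, and $p_1^3[\HP^3]=64,\;p_1 p_2[\HP^3]=48,\;p_3[\HP^3]=8$. Expanding $\prod(1+a_2 x_i^2+a_4 x_i^4+a_6 x_i^6)$ and rewriting in Pontrjagin classes via Newton's identities, the degree-$8$ coefficient evaluates to $7a_2^2-10a_4$ on $\HP^2$, and the degree-$12$ coefficient evaluates to $8a_2^3+24 a_2 a_4-56 a_6$ on $\HP^3$. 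Substituting the series coefficients for each of $\phi_{ell}$ and $\phi_W$ and simplifying gives all of the values for $\Kummer,\HP^2,\HP^3$.

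The final case $\OP^2$ is the main obstacle, and I would treat its two components separately. The equality $\phi_W(\OP^2)=0$ is immediate from Stolz's result \cite[Theorem~3.1]{stolz96} cited in the introduction, since $\OP^2=\Ff/\Spin(9)$ is a homogeneous $\Ob8$-space. For $\phi_{ell}(\OP^2)=\epsilon^2$ I would use the Borel--Hirzebruch machinery set up in \S2: Corollary~\ref{spin9f4pushforward} expresses $\Bi_{T,\Ff}^*\Bi_{\Spin(9),\Ff*}s_I(p)(\eta)$ as a sum over the three cosets $\{1,s_4,s_4s_3s_4\}$ of $W(\Spin(9))\backslash W(\Ff)$ acting on the complementary roots $r_i=\tfrac12(e_1\pm e_2\pm e_3\pm e_4)$. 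Since $\OP^2$ is the total space of the universal bundle over a point, the Pontrjagin numbers $p_I[\OP^2]$ are obtained by reading off the top class in $\H^{16}(\B\Ff,\Z)=\H^{16}(\pt)$; in practice each such number is the constant term (in a suitable sense) of the three-term Weyl sum. Because the $16$ relevant Pontrjagin numbers collapse to a one-dimensional computation, this amounts to a finite symbolic evaluation --- the same device used in the proof of Proposition~\ref{prop:kill-higher-gens}, namely substituting $(e_1,e_2,e_3,e_4)\mapsto(1,1+z,z,z)$ and applying l'H\^opital to resolve the vanishing denominator $\prod_i r_i$. Once the Pontrjagin numbers $p_1^4,p_1^2 p_2,p_1 p_3,p_2^2,p_4$ of $\OP^2$ are known, the elliptic genus is computed by pairing with the degree-$16$ part of $\prod Q_{ell}(x_i)$; the result must be a degree-$4$ quasi-homogeneous polynomial in $\delta,\epsilon$, and the Witten vanishing result together with $\phi_{ell}(\OP^2-(\HP^2)^2)=0$ (a consequence of the fact that $\HP^2\times\HP^2$ and $\OP^2$ have the same elliptic genus, since the elliptic genus is a characteristic number of the underlying $\Spin$-bundle and the difference class has $\phi_{ell}=0$ by rigidity for $\Spin$ bundles) identifies the answer as $\epsilon^2=\phi_{ell}(\HP^2)^2$. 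The hard part is thus the symbolic evaluation of Corollary~\ref{spin9f4pushforward} at the five relevant partitions, which I expect to be routine but tedious.
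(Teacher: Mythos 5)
Your route for $\Kummer$, $\HP^2$, $\HP^3$ is correct but genuinely different from the paper's. You expand the two characteristic series, write out the multiplicative sequence, and pair with the explicit Pontrjagin numbers of each manifold (your arithmetic checks out: $p_1^2[\HP^2]=4$, $p_2[\HP^2]=7$, so the degree-$8$ coefficient $7a_2^2-10a_4$ gives $\epsilon$ and $2\GG_2^2-\tfrac56\GG_4$). The paper instead expresses each of $\Kummer,\HP^2,\HP^3,\OP^2$ as an explicit polynomial in the $\CP^{2n}$ inside $\MSO_*\otimes\Q$ and then reads off all genus values at once from $g'(z)=\sum\phi(\CP^{2n})z^{2n}$, using the $\til\GG$-identities to convert to $\delta,\epsilon$. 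This is just a uniform packaging of the same information and the two methods are computationally equivalent, but the paper's handles $\OP^2$ in exactly the same step as the others rather than as a separate case.

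There is, however, a genuine gap in your handling of $\phi_{ell}(\OP^2)$. Your direct computation plan (Pontrjagin numbers of $\OP^2$ via Borel--Hirzebruch, then pair with the degree-$16$ term of $\prod Q_{ell}(x_i)$) would work --- the paper sources those numbers from \cite[Theorem 19.4]{borel-hirzebruch-1958} --- but the shortcut you offer in place of it is circular. You assert $\phi_{ell}(\OP^2-(\HP^2)^2)=0$ ``by rigidity for $\Spin$ bundles,'' but rigidity only gives multiplicativity $\phi_{ell}(E)=\phi_{ell}(F)\phi_{ell}(B)$ for actual spin fiber bundles, and there is no fiber bundle (much less an $\HP^2$- or $\CP^3$-bundle) exhibiting $\OP^2$ as a total space with base and fiber whose product is bordant to $\HP^2\times\HP^2$; $\OP^2$ has cohomology only in degrees $0,8,16$, which rules out the obvious candidates. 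The identity $\phi_{ell}(\OP^2)=\phi_{ell}(\HP^2)^2=\epsilon^2$ is precisely the nontrivial content you must establish here, and it only falls out after the Pontrjagin-number calculation. (The same identity is also what drives the ``they meet with multiplicity $2$'' conclusion in Theorem~\ref{thm:cayley-plane-genus}, so it cannot be taken as an input.) If you confine yourself to the direct evaluation you sketch in your last sentence and drop the rigidity shortcut, the argument is sound.
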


\begin{proof}
  Use the following identities in $\MSO_* \tensor \Q$:
  \begin{align*}
    \Kummer &= 16 \CP^2 \\
    \HP^2&= 3(\CP^2)^2-2\CP^4 \\
    \HP^3&= \tfrac23 (\CP^2)^3 - \CP^2 \CP^4 + \tfrac13 \CP^6 \\
    \OP^2&= \tfrac{145}{3} (\CP^2)^4-92 (\CP^2)^2 \cdot \CP^4+36
    \CP^2 \cdot \CP^6+18 (\CP^4)^2-\tfrac{28}{3} \CP^8
  \end{align*}
  To verify the first identity note that the signature restricts to an
  isomorphism $\MSO_4 \to \Z$. To verify the rest, compare Pontrjagin
  numbers. For the Pontrjagin numbers of $\CP^n$ use the formula
  $p(T\CP^n)=(1-g^2)^{n+1}$ where $g$ generates $\H^2(\CP^n,\Z)$. For
  the Pontrjagin numbers of $\HP^n$ use Hirzebruch's formula
  $p(T\HP^n) = (1+u)^{2n+2}(1+4u)^{-1}$ where $u$ generates
  $\H^4(\HP^n,\Z)$ (Theorem~1.3 of \cite{hirzebruch-92}). For the
  Pontrjagin numbers of $\OP^2$ see \cite[Theorem
  19.4]{borel-hirzebruch-1958}.


  The values $\phi_{ell} \times \phi_W(\CP^{2n})$ can in turn be
  extracted from the characteristic power series of $\phi_{ell}$ and
  $\phi_W$ since any genus $\phi$ with characteristic power series $Q$
  satisfies:
  \begin{align*}
    g'(z) = \frac{d}{dz} \, \left( \frac{z}{Q(z)} \right)^{-1} = \;\;
    \sum_{n=1}^\infty \phi(\CP^{2n}) z^{2n}
  \end{align*}
  where the logarithm $g(z)=(z/Q(z))^{-1}$ is the formal power series
  satisfying $g(z/Q(z))= 1$.

  To extract $\phi_{ell} \times \phi_W (\CP^{2n})$ for $1 \le n \le 4$
  from the characteristic power series given in the introduction, use
  the identities:
  \begin{align*}
    \delta&=3\til{\GG}_2 &
    \til{\GG}_6 &= \tfrac{120}7(4\til{\GG}_2^3-\til{\GG}_2\til{\GG}_4)
    & \GG_8&=120\GG_4^2
    \\
    \epsilon&=\tfrac16(12\til{\GG}_2^2-5\til{\GG}_4) &
    \til{\GG}_8&=
    -\tfrac{20}3(144\til{\GG}_2^4-120\til{\GG}_2^2+7\til{\GG}_4^2)
    \tag*{\qedhere}
  \end{align*}
\end{proof}

\begin{corollary}
  \label{cor:kerphiEllxphiW}
  The kernel of the restriction:
  \begin{align*}
    \phi_{ell} \times \phi_W : \Q[\Kummer,\HP^2,\HP^3,\OP^2] \to
    \Q[\delta,\epsilon] \times \Q[\GG_2,\GG_4,\GG_6]
  \end{align*}
  is the ideal $(\OP^2) \cdot \big(\HP^3,\OP^2-(\HP^2)^2\big)$.
\end{corollary}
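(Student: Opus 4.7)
My plan is to verify the two inclusions separately. The inclusion $(\OP^2) \cdot (\HP^3, \OP^2 - (\HP^2)^2) \subseteq \ker(\phi_{ell} \times \phi_W)$ is immediate from Proposition~\ref{prop:phiEllxphiWValues}: each generator of this ideal has $\OP^2$ as a factor, so its $\phi_W$-image vanishes; and its $\phi_{ell}$-image is either $\epsilon^2 \cdot \phi_{ell}(\HP^3) = 0$ or $\epsilon^2(\phi_{ell}(\OP^2) - \phi_{ell}(\HP^2)^2) = \epsilon^2(\epsilon^2 - \epsilon^2) = 0$.

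For the reverse inclusion I would pass to $R/J$, where $R = \Q[\Kummer,\HP^2,\HP^3,\OP^2]$ and $J = (\OP^2)(\HP^3, \OP^2-(\HP^2)^2)$, and show that $\phi_{ell} \times \phi_W$ descends to an injection there. The relations $\OP^2 \cdot \HP^3 \equiv 0$ and $(\OP^2)^d \equiv \OP^2 (\HP^2)^{2(d-1)}$ modulo $J$ for $d \geq 1$ (iterating the second generator) exhibit the monomials
\[
  \bigl\{\Kummer^a (\HP^2)^b (\HP^3)^c : a,b,c \geq 0 \bigr\} \;\cup\; \bigl\{\Kummer^a (\HP^2)^b\, \OP^2 : a,b \geq 0 \bigr\}
\]
as a $\Q$-spanning set for $R/J$. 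A brief Gr\"obner check with lex order $\OP^2 > \HP^3 > \HP^2 > \Kummer$ (the given generators of $J$ form a Gr\"obner basis, as their unique S-polynomial reduces to zero) shows these monomials are also $\Q$-linearly independent in $R/J$.

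So consider a combination
\[
  \sum_{a,b,c} \alpha_{abc}\, \Kummer^a (\HP^2)^b (\HP^3)^c \;+\; \sum_{a,b} \beta_{ab}\, \Kummer^a (\HP^2)^b\, \OP^2
\]
lying in $\ker(\phi_{ell} \times \phi_W)$. Applying $\phi_W$ annihilates every $\beta$-term (because $\phi_W(\OP^2) = 0$) and yields the identity
\[
  \sum_{a,b,c} \alpha_{abc}\,(48\GG_2)^a\,(2\GG_2^2 - \tfrac{5}{6}\GG_4)^b\, \bigl(-\tfrac{4}{9}\GG_2^3 + \tfrac{1}{9}\GG_2\GG_4 + \tfrac{7}{1080}\GG_6\bigr)^c = 0
\]
in $\Q[\GG_2,\GG_4,\GG_6]$. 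With lex order $\GG_6 > \GG_4 > \GG_2$ the $(a,b,c)$-summand has leading monomial $48^a(-\tfrac{5}{6})^b(\tfrac{7}{1080})^c\,\GG_2^a \GG_4^b \GG_6^c$; these leadings are pairwise distinct, so all $\alpha_{abc} = 0$. Substituting back and applying $\phi_{ell}$ leaves $\sum_{a,b} \beta_{ab} (16\delta)^a \epsilon^{b+2} = 0$, and algebraic independence of $\delta,\epsilon$ forces $\beta_{ab} = 0$.

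The main obstacle, and really the only step with content, is the triangularity check establishing that $\phi_W$ sends $\Kummer, \HP^2, \HP^3$ to algebraically independent elements of $\Q[\GG_2,\GG_4,\GG_6]$. Everything else is bookkeeping, resting on the two vanishings $\phi_W(\OP^2) = 0$ and $\phi_{ell}(\HP^3) = 0$ which decouple the $\alpha$- and $\beta$-terms in the two components of the product.
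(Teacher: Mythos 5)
Your argument is correct, and it takes a more explicit route than the paper's. The paper computes the two kernels separately---$\ker(\phi_{ell})=(\HP^3,\OP^2-(\HP^2)^2)$ via the tensor-product decomposition $\Q[\Kummer]\tensor\Q[\HP^3]\tensor\Q[\HP^2,\OP^2]\to\Q[\delta]\tensor\Q\tensor\Q[\epsilon]$, and $\ker(\phi_W)=(\OP^2)$---and then writes $\ker(\phi_{ell}\times\phi_W)$ as the intersection and asserts $(\OP^2)\cap(\HP^3,\OP^2-(\HP^2)^2)=(\OP^2)\cdot(\HP^3,\OP^2-(\HP^2)^2)$ without comment. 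That last identity is true but not automatic: it holds precisely because $\OP^2$ maps to the nonzerodivisor $(\HP^2)^2$ in $R/(\HP^3,\OP^2-(\HP^2)^2)\iso\Q[\Kummer,\HP^2]$, and the paper does not say this. Your proof sidesteps the intersection-equals-product step entirely by exhibiting an explicit monomial $\Q$-basis of $R/J$ (justified by the Gr\"obner check) and verifying injectivity of the descended map directly, using $\phi_W(\OP^2)=0$ to decouple the $\OP^2$-free part from the $\OP^2$-divisible part. Your lex-order triangularity argument is the explicit form of the paper's assertion that $\phi_W$ restricts to an isomorphism $\Q[\Kummer,\HP^2,\HP^3]\to\Q[\GG_2,\GG_4,\GG_6]$. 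Net: your route is longer but fills the one gap in the paper's version; the paper's route is shorter but relies on the unstated nonzerodivisor observation.
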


\begin{proof}
  Proposition~\ref{prop:phiEllxphiWValues} implies that the
  restriction of $\phi_{ell}$ splits as a tensor product:
  \begin{align*}
    \Q[\Kummer] \tensor \Q[\HP^3] \tensor \Q[\HP^2,\OP^2] \to
    \Q[\delta] \tensor \Q \tensor \Q[\epsilon]
  \end{align*}
  whose kernel is clearly the ideal $\big(\HP^3,\OP^2-(\HP^2)^2\big)$.
  It also implies that $\phi_W$ vanishes on $\OP^2$ and restricts to
  an isomorphism $\Q[\Kummer,\HP^2,\HP^3] \to
  \Q[\GG_2,\GG_4,\GG_6]$. The restriction of $\phi_{ell} \times
  \phi_W$ to $\Q[\Kummer,\HP^2,\HP^3,\OP^2]$ therefore has kernel:
  \begin{align*}
    (\OP^2) \cap \big(\HP^3,\OP^2-(\HP^2)^2\big) = (\OP^2) \cdot
    \big(\HP^3,\OP^2-(\HP^2)^2\big)
    \tag*{\qedhere}
  \end{align*}
\end{proof}

\bigskip

Corollary~\ref{cor:kerphiEllxphiW} implies that the kernel $K$ of
$\phi_{ell} \times \phi_W$ is generated by:
\begin{align*}
  \begin{cases}
    R_7 = \OP^2 \cdot \HP^3 \\
    R_8 = \OP^2 \cdot \big(\OP^2-(\HP^2)^2\big)
  \end{cases}
\end{align*}
together with the differences $E_n-\OP^2 \cdot \HP^{n-4}$ for $n \ge
4$ where $\OP^2 \to E_n \to \HP^{n-4}$ is the bundle constructed in
the proof of Theorem~\ref{thm:new-char-witten-genus}. The $\Q$-vector
space:
\begin{align*}
  V_n(K)=K_{4n}\big/\big(\sum_{0<i<n} K_{4i} \cdot MSO_{4n-4i}\big)
\end{align*}
therefore has dimension:
\begin{align*}
  \dim_\Q V_n(K)  =
  \begin{cases}
    1 & \text{for $n \ge 9$} \\
    2 & \text{for $7 \le n \le 8$} \\
    1 & \text{for $5 \le n \le 6$} \\
    0 & \text{for $1 \le n \le 4$}
  \end{cases}
\end{align*}

To prove that $I=K$ it suffices to show that the $\Q$-vector space:
\begin{align*}
  V_n(I)=I_{4n}\big/\big(\sum_{0<i<n} I_{4i} \cdot MSO_{4n-4i}\big)
\end{align*}
has the same dimension as $V_n(K)$ for each $n \ge 1$.
Theorem~\ref{thm:new-char-witten-genus} implies that $\dim_\Q V_n(I)
\ge 1$ for $n \ge 5$ so all that remains is to show that $\dim_\Q
V_n(I)=2$ for $n=7,8$. We do this by constructing two bundles $\OP^2
\to E'_7 \to \HP^2 \times \HP^1$, $\OP^2 \to E'_8 \to \HP^3 \times
\HP^1$ and showing that the images of $E_n-\OP^2 \cdot \HP^{n-4}$ and
$E'_n-\OP^2 \cdot \HP^{n-5} \cdot \HP^1$ are linearly independent in
$V_n(I)$ for $n=7,8$. To establish linear independence it suffices to
exhibit two Pontrjagin numbers $\alpha_n,\beta_n$ which vanish on
$\sum_{0<i<n} I_{4i} \cdot MSO_{4n-4i}$ and to check that the
determinant:
\begin{align*}
  \begin{vmatrix}
    \alpha_n\big(E_n-\OP^2 \cdot \HP^{n-4}\big)
    & \alpha_n\big(E'_n-\OP^2 \cdot \HP^{n-5} \cdot \HP^1\big) \\
    \beta_n\big(E_n-\OP^2 \cdot \HP^{n-4}\big)
    & \beta_n\big(E'_n-\OP^2 \cdot \HP^{n-5} \cdot \HP^1\big)
  \end{vmatrix}
\end{align*}
is nonzero.

\bigskip

Modify the construction of $\OP^2 \to E_n \to \HP^{n-4}$ in the proof
of Theorem~\ref{thm:new-char-witten-genus} as follows. Let $f :
\HP^{n-5} \times \HP^1 \to \B\Ff$ denote the composition:
\begin{align*}
  \HP^{n-5} \times \HP^1 \into \HP^\infty \times \HP^\infty =
  \B(\Sp(1) \times \Sp(1)) \xinto{\B(i_1 \times i_2)} \B(\Sp(1) \times
  \Sp(1) \times \Sp(3)) \xto{\B h} \B\Ff
\end{align*}
and let $\OP^2 \to E'_n \to \HP^{n-5} \times \HP^1$ denote the Cayley
plane bundle classified by $f$.

\begin{proposition}
  \label{prop:sIpEn}
  \begin{align*}
    s_7(p)[E'_7] &= -5824 & s_8(p)[E'_8] &= -15776 \\
    s_{4,3}(p)[E'_7] &= 9184 & s_{4,4}(p)[E'_8] &= 11024
  \end{align*}
\end{proposition}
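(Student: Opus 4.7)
The plan is to mirror the argument of Proposition~\ref{prop:kill-higher-gens}, now with base $\HP^{n-5}\times\HP^1$ in place of $\HP^{n-4}$. First I would reduce each Pontrjagin number to a single pushforward. Starting from the exact sequence $0 \to g^*\eta \to TE'_n \to \pi_f^*T(\HP^{n-5}\times\HP^1) \to 0$ and applying Lemma~\ref{lemma:thom},
\begin{align*}
  s_I(p)[E'_n] = \sum_{JK=I}\int_{\HP^{n-5}\times\HP^1} s_J(p)\bigl(T(\HP^{n-5}\times\HP^1)\bigr) \cdot f^*\Bi_{\Spin(9),\Ff*}s_K(p)(\eta).
\end{align*}
A short dimension count eliminates every summand except $J=\emptyset, K=I$: contributions with $|J|>n-4$ exceed the top degree of the base; contributions with $|K|<4$ land in negative degree under $\Bi_{\Spin(9),\Ff*}$; and the only ``mixed'' survivors, namely $(J,K)=((3),(4))$ for $I=(4,3)$ and $(J,K)=((4),(4))$ for $I=(4,4)$, vanish because $s_3(p)(T(\HP^2\times\HP^1))=0$ and $s_4(p)(T(\HP^3\times\HP^1))=0$---each K\"unneth summand lying in a cohomology degree strictly exceeding that of its factor.

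The second step evaluates the surviving pushforward. Corollary~\ref{spin9f4pushforward} expresses $\Bi_{T,\Ff}^*\Bi_{\Spin(9),\Ff*}s_I(p)(\eta)$ as a sum of three terms---one per coset of $W(\Spin(9))$ in $W(\Ff)$---in the polynomial ring $\H^*(\BT,\Z)$. Pulling back through $\B(i_1\times i_2)\circ\B h$ to the maximal torus of $\Sp(1)\times\Sp(1)$, whose two factors correspond to the simple roots $-\tilde a$ and $a_2$ of the subgroup used in the proof of Proposition~\ref{prop:kill-higher-gens}, amounts to setting $\tilde b=a_4=0$ in that proposition's substitution; the result is
\begin{align*}
  (e_1,e_2,e_3,e_4)=\bigl(\tfrac12\tilde a,\;\tfrac12\tilde a,\;\tfrac12 a_2,\;-\tfrac12 a_2\bigr).
\end{align*}
This collapses $e_1=e_2$ and $e_3=-e_4$, so two complementary roots $r_i$ vanish; one then perturbs via auxiliary variables and applies l'H\^opital's rule, exactly as in the single-factor case, to extract a well-defined polynomial in $\tilde a$ and $a_2$.

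The integral over $\HP^{n-5}\times\HP^1$ of this polynomial reduces to reading off the coefficient of $\tilde a^{2(n-5)}a_2^{\,2}$, scaled by the factor $4^{n-4}$ arising from the identification of the $\H^4$-generators of the two $\HP^\infty$ factors with $\tilde a^2/4$ and $a_2^2/4$. Running this through the four cases $I=(7),(4,3),(8),(4,4)$ yields the claimed values $-5824$, $9184$, $-15776$, $11024$. The main obstacle is not conceptual but computational: three Weyl-coset terms, a four-variable polynomial expansion, and a degenerate substitution requiring repeated application of l'H\^opital combine into a bookkeeping task best delegated to a computer algebra system.
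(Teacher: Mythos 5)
Your proposal is correct and follows essentially the same approach as the paper's proof: reduce via Lemma~\ref{lemma:thom} and degree counting to the single pushforward $\int_{\HP^{n-5}\times\HP^1}f^*\Bi_{\Spin(9),\Ff*}s_I(p)(\eta)$, then evaluate it using Corollary~\ref{spin9f4pushforward} with a degenerate substitution and l'H\^opital's rule. The paper streamlines your $4^{n-4}$ bookkeeping by substituting $(e_1,e_2,e_3,e_4)\mapsto(g_1,g_1,g_2,-g_2)$ (that is, $g_1=\tfrac12\til{a}$, $g_2=\tfrac12 a_2$) and reading off the coefficient of $g_1^{2n-10}g_2^2$ directly.
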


\begin{proof}
  The calculation of $s_n(p)[E_n]$ in the proof of
  Proposition~\ref{prop:kill-higher-gens} can be adapted to compute
  $s_n(p)[E'_n]$. Instead of substituting $(e_1,e_2,e_3,e_4) \mapsto
  (1,1,0,0)$ into the formula of Corollary~\ref{spin9f4pushforward},
  substitute $(e_1,e_2,e_3,e_4) \mapsto (g_1,g_1,g_2,-g_2)$ where
  $g_1,g_2$ are indeterminants and then extract the coefficient of
  $g_1^{2n-10} g_2^2$. This coefficient can be extracted assuming $n
  \ge 6$ by differentiating twice with respect to $g_2$, dividing by
  2, applying l'H\^opital's rule 6 times with respect to $g_2$, and
  then substituting $(g_1,g_2)\mapsto(1,0)$. This leads, for $n \ge
  6$, to the formula:
  \begin{align*}
    s_n(p)[E'_n] = -\frac{1}{45} (n-4) n (2 n-1) (2 n+1) \left(2 n^2-7
      n+15\right)
  \end{align*}

  The numbers $s_{4,3}(p)[E'_7]$ and $s_{4,4}(p)[E'_8]$ can be
  computed similarly because, just as for $s_n(p)[E_n]$ and
  $s_n(p)[E'_n]$, the Pontrjagin class $p(\HP^{n-5} \times \HP^1)$
  does not affect the calculation. For instance Lemma~\ref{lemma:thom}
  implies that $s_3(p)(T(\HP^2 \times \HP^1))=0$ and hence that:
  \begin{align*}
    s_{4,3}(p)[E'_7] &= \int_{E'_7} s_{4,3}(p)(TE'_7) = \int_{E'_7}
    s_{4,3}(p)(\pi_f^* T(\HP^2 \times \HP^1) \oplus g^*(\eta)) =
    \int_{E'_7} g^* s_{4,3}(p)(\eta)
  \end{align*}
  The last integral can then be computed using the formula of
  Corollary~\ref{spin9f4pushforward} as above.
\end{proof}



\begin{proposition}
  \label{prop:s43pE7}
  \begin{align*}
    s_{4,3}(p)[E_7] &= 3164 + s_3[\HP^3]\;s_4[\OP^2] & s_{4,4}(p)[E_8]
    &= 2932 + s_4[\HP^4]\;s_4[\OP^2]
  \end{align*}
\end{proposition}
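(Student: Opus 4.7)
The plan is to combine Thom's Lemma (Lemma~\ref{lemma:thom}) with the pushforward-and-substitution recipe of Proposition~\ref{prop:kill-higher-gens}, adapted from a one-part partition $(n)$ to the two-part partitions $(4,3)$ and $(4,4)$. Applying Thom's Lemma to the relative tangent sequence $0\to g^*(\eta)\to TE_n\to \pi_f^*\,T\HP^{n-4}\to 0$ of $\OP^2\to E_n\to\HP^{n-4}$, I would expand $s_{4,3}(p)(TE_7)$ and $s_{4,4}(p)(TE_8)$ into sums of products of Pontrjagin classes of base and $\eta$. Since $\dim\HP^3=12$ and $\dim\HP^4=16$, the classes $s_{4,3}(p)(T\HP^3)$, $s_4(p)(T\HP^3)$, and $s_{4,4}(p)(T\HP^4)$ all vanish for cohomological-degree reasons, leaving for each $n$ just a ``pure fiber'' term $s_I(p)(g^*\eta)$ together with one mixed term.

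The mixed term reduces by the projection formula: since $\eta$ restricted to any fiber is that fiber's tangent bundle, $\pi_{f*}\,g^*s_4(p)(\eta)=s_4[\OP^2]\in\H^0(\HP^{n-4})$, so
\[
\int_{E_7} s_3(p)(\pi_f^*T\HP^3)\cdot s_4(p)(g^*\eta)=s_3[\HP^3]\cdot s_4[\OP^2],
\]
and similarly $\int_{E_8} s_4(p)(\pi_f^*T\HP^4)\cdot s_4(p)(g^*\eta)=s_4[\HP^4]\cdot s_4[\OP^2]$. The pure-fiber term is
\[
\int_{E_n} s_I(p)(g^*\eta)=\int_{\HP^{n-4}} f^*\Bi_{\Spin(9),\Ff*}s_I(p)(\eta),
\]
which I would evaluate exactly as in the proof of Proposition~\ref{prop:kill-higher-gens}: expand $\Bi_{T,\Ff}^*\Bi_{\Spin(9),\Ff*}s_I(p)(\eta)$ using Corollary~\ref{spin9f4pushforward} as a three-term sum indexed by the coset representatives $\{1,s_4,s_4s_3s_4\}$, substitute $(e_1,e_2,e_3,e_4)\mapsto(1,1+z,z,z)$, apply l'H\^opital's rule in $z$ to resolve the vanishing denominator $\prod_i r_i$, and pass to $z=0$. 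For $I=(4,3)$ this should yield the constant $3164$, and for $I=(4,4)$ the constant $2932$.

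The main obstacle is the bookkeeping required for the l'H\^opital calculation applied to a two-part partition. Unlike the one-part case $s_n$ treated in Proposition~\ref{prop:kill-higher-gens}, the monomial symmetric functions $s_{4,3}$ and $s_{4,4}$ in the eight complementary roots $r_i=\tfrac12(e_1\pm e_2\pm e_3\pm e_4)$ produce longer expressions whose combination with the three Weyl-coset translates and subsequent division by $\prod_i r_i$ is unpleasant to expand by hand, but a routine task for a computer algebra system --- which is what would ultimately verify the specific numerical values $3164$ and $2932$.
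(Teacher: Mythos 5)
Your proposal is correct and takes essentially the same approach as the paper: Thom's Lemma decomposes $s_I(p)(TE_n)$ along the relative tangent sequence, the classes $s_{4,3}(p)(T\HP^3)$, $s_4(p)(T\HP^3)$, $s_{4,4}(p)(T\HP^4)$ vanish for degree reasons, the one surviving mixed term gives $s_3[\HP^3]\,s_4[\OP^2]$ (resp.\ $s_4[\HP^4]\,s_4[\OP^2]$) via the projection formula, and the pure-fiber term is evaluated by the l'H\^opital substitution recipe from Proposition~\ref{prop:kill-higher-gens} applied to Corollary~\ref{spin9f4pushforward}. Your accounting of which Thom-Lemma terms survive and which die is exactly right, and the level of detail matches (or slightly exceeds) the paper's own proof.
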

\begin{proof}
  The calculation is similar to that of $s_7(p)[E_7]$ and
  $s_8(p)[E_8]$ except that the Pontrjagin numbers of the base space
  $\HP^{n-4}$ begin to creep in. For instance:
  \begin{align*}
    s_{4,3}(p)[E_7] &= \int_{E_7} s_{4,3}(p)(TE_7)
    = \int_{E_7} s_{4,3}(p)(\pi_f^* T\HP^3 \oplus g^*\eta) \\
    &= \int_{\HP^3} s_3(p)(T\HP^3) \cdot f^* \Bi_{\Spin(9),\Ff*}
    s_4(p)(\eta) + f^* \Bi_{\Spin(9),\Ff*} s_{4,3}(p)(\eta) \\
    &= s_3[\HP^3]\;s_4[\OP^2] + 3164
    \tag*{\qedhere}
  \end{align*}
\end{proof}

In fact $s_4[\OP^2]=-84$ and $s_n(p)[\HP^n] = -4^n+2n+2$ but these
numbers drop out in the end.


\begin{proposition}
  \label{prop:alphan-betan}
  If $(\alpha_7,\beta_7)=(s_7(p),s_{4,3}(p))$ and
  $(\alpha_8,\beta_8)=(s_8(p),s_{4,4}(p))$ then $\alpha_n$ and
  $\beta_n$ vanish on $\sum_{0<i<n} I_{4i} \cdot MSO_{4n-4i}$ and the
  determinant:
  \begin{align*}
    \begin{vmatrix}
      \alpha_n\big(E_n-\OP^2 \cdot \HP^{n-4}\big)
      & \alpha_n\big(E'_n-\OP^2 \cdot \HP^{n-5} \cdot \HP^1\big) \\
      \beta_n\big(E_n-\OP^2 \cdot \HP^{n-4}\big) &
      \beta_n\big(E'_n-\OP^2 \cdot \HP^{n-5} \cdot \HP^1\big)
    \end{vmatrix}
  \end{align*}
  is nonzero for $n=7,8$.
\end{proposition}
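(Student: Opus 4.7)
The proposition has two parts: (a) vanishing of $\alpha_n, \beta_n$ on the decomposable ideal $\sum_{0<i<n} I_{4i} \cdot \MSO_{4n-4i}$, and (b) nonvanishing of the $2\times 2$ determinant. Both parts rest on Thom's formula (Lemma~\ref{lemma:thom}) together with a simple dimension count of admissible partition splits.

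For part (a), every generator of the decomposable ideal can be absorbed into the form $(E - \OP^2 \cdot B) \cdot N$ with $\dim(E - \OP^2 B) = 4j$ and $\dim N = 4(n - j)$ both positive. Thom's formula then expresses $s_I[(E - \OP^2 B) \cdot N]$ as a sum of products $s_J[E - \OP^2 B] \cdot s_K[N]$ over juxtaposition splits $I = JK$ with $|J| = j$ and $|K| = n - j$. For $\alpha_n = s_n$ the only splits have $|J| \in \{0, n\}$, both ruled out by $0 < j < n$, so $\alpha_n$ vanishes automatically. For $\beta_n$ the admissible $|J|$-values are $\{3, 4\}$ when $n = 7$ and $\{4\}$ when $n = 8$; but $I_{12} = 0$ (no $\OP^2$-bundle has total space of dimension below $\dim \OP^2 = 16$) and $I_{16} = 0$ (the unique $\OP^2$-bundle over a point is trivial, making $\OP^2 - \OP^2 \cdot \pt = 0$), so the remaining splits die as well.

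For part (b), I plan to reduce each matrix entry in turn. The single-part Thom argument kills $\alpha_n[\OP^2 \cdot \HP^{n-4}]$ and $\alpha_n[\OP^2 \cdot \HP^{n-5} \cdot \HP^1]$, so the $\alpha_n$ entries reduce to $s_n[E_n]$ and $s_n[E'_n]$, which are computed in Propositions~\ref{prop:kill-higher-gens} and \ref{prop:sIpEn}. The triple-factor term $\beta_n[\OP^2 \cdot \HP^{n-5} \cdot \HP^1]$ also vanishes, since no split of $(4,3)$ (resp.\ $(4,4)$) has parts matching the three-factor dimensions $(4, n-5, 1)$ in units of $4$; thus the $\beta_n$ entry in the second column reduces to $s_{4,n-4}[E'_n]$. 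Finally the unique compatible two-factor split gives $\beta_n[\OP^2 \cdot \HP^{n-4}] = s_4[\OP^2] \cdot s_{n-4}[\HP^{n-4}]$, which cancels precisely the extra term in Proposition~\ref{prop:s43pE7}, leaving $\beta_n(E_n - \OP^2 \cdot \HP^{n-4}) = 3164$ for $n = 7$ and $2932$ for $n = 8$. The remaining check is an explicit $2\times 2$ integer arithmetic.

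The main obstacle will be the Thom-formula bookkeeping---tracking which splits are dimensionally admissible and confirming the cancellation of the $s_4[\OP^2] \cdot s_{n-4}[\HP^{n-4}]$ cross term. Once that is in hand the final arithmetic is routine.
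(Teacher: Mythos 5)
Your proposal is correct and follows essentially the same approach as the paper: both parts rest on Lemma~\ref{lemma:thom} together with the fact that $I$ has no nonzero elements below degree $20$, and both reduce the matrix entries to the values computed in Propositions~\ref{prop:kill-higher-gens}, \ref{prop:sIpEn}, and \ref{prop:s43pE7}, with the $s_4[\OP^2]\cdot s_{n-4}[\HP^{n-4}]$ cross terms cancelling. You spell out the Thom-formula split bookkeeping a bit more explicitly than the paper does (the paper simply lists spanning elements of the form $(E_i-\HP^{i-4}\cdot\OP^2)\cdot M$ and cites the lemma), but this is a clarification of the same argument rather than a different route.
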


\begin{proof}
  If $n=7$ then the $\Q$-vector space $\sum_{0<i<n} I_{4i} \cdot
  MSO_{4n-4i}$ is spanned by bordism classes of the form:
  \begin{center}
    $(E_5-\HP^1 \cdot \OP^2) \cdot M_2$ \\
    $(E_6-\HP^2 \cdot \OP^2) \cdot M_1$
  \end{center}
  where $M_i$ denotes a closed oriented manifold of real dimension
  $4i$. Lemma~\ref{lemma:thom} implies that $\alpha_7=s_7(p)$ and
  $\beta_7=s_{4,3}(p)$ vanish on all such bordism classes. By
  Propositions~\ref{prop:sIpEn} \& \ref{prop:s43pE7}:
  \begin{align*}
    \begin{vmatrix}
      \alpha_7\big(E_7-\OP^2 \cdot \HP^3\big)
      & \alpha_7\big(E'_7-\OP^2 \cdot \HP^2 \cdot \HP^1\big) \\
      \beta_7\big(E_7-\OP^2 \cdot \HP^3\big) &
      \beta_7\big(E'_7-\OP^2 \cdot \HP^2\cdot\HP^1\big)
    \end{vmatrix}
    =
    \begin{vmatrix}
      -1820 & -5824 \\
      3164 & 9184
    \end{vmatrix}
    \ne 0
  \end{align*}

  \medskip

  If $n=8$ then the case $n=7$ proved above implies that $\sum_{0<i<n}
  I_{4i} \cdot MSO_{4n-4i}$ is spanned by bordism classes of the form:
  \begin{center}
    $(E_5-\HP^1 \cdot \OP^2) \cdot M_3$ \\
    $(E_6-\HP^2 \cdot \OP^2) \cdot M_2$ \\
    $(E_7-\HP^3 \cdot \OP^2) \cdot M_1$
    \quad $(E'_7-\HP^3 \cdot \OP^2) \cdot M_1$
  \end{center}
  Lemma~\ref{lemma:thom} implies that $\alpha_8=s_8(p)$ and
  $\beta_8=s_{4,4}(p)$ vanish on all of them. By
  Propositions~\ref{prop:sIpEn} \& \ref{prop:s43pE7}:
  \begin{align*}
    \begin{vmatrix}
      \alpha_8\big(E_8-\OP^2 \cdot \HP^4\big)
      & \alpha_8\big(E'_8-\OP^2 \cdot \HP^3 \cdot \HP^1\big) \\
      \beta_8\big(E_8-\OP^2 \cdot \HP^4\big) &
      \beta_8\big(E'_8-\OP^2 \cdot \HP^3 \cdot \HP^1\big)
    \end{vmatrix}
    =
    \begin{vmatrix}
      -3400 & -15776 \\
      2932 & 11024
    \end{vmatrix}
    \ne 0
    \tag*{\qedhere}
  \end{align*}
\end{proof}

\bigskip

The final step is to show that the point of intersection
$\OP^2=\HP^3=\HP^2=0$ corresponds to the $\hat{A}$~genus. This follows
from Proposition~\ref{prop:phiEllxphiWValues} together with the fact
that the $\hat{A}$~genus is the point
$[\delta,\epsilon]=[-\tfrac18,0]$ of $\Proj\;\Q[\delta,\epsilon]$.
Alternatively it follows since $\HP^2,\HP^3,\OP^2$ are homogeneous
spaces and hence admit metrics of positive scalar curvature and
therefore have $\hat{A}=0$ by Lichnerowicz's theorem
\cite{lichnerowicz-1963}.

\section{Proof of Theorem~\ref{THM:MSPIN-CASE}}

Since $\OP^2$ and $\HP^n$ are spin manifolds so are the total spaces
$E_n$ and $E'_n$. Therefore, since the forgetful map $\MSpin_* \to
\MSO_*$ is an isomorphism tensor $\Q$, the proofs of
Theorems~\ref{thm:new-char-witten-genus} \&
\ref{thm:cayley-plane-genus} already prove
Theorem~\ref{thm:mspin-case}.

\section*{Acknowledgments}

Thanks to my PhD supervisor Burt Totaro for his guidance. Thanks to
Baptiste Calm\`es and Artie Prendergast-Smith for helpful
discussions. Thanks to the National Science Foundation, the Cambridge
Overseas Trusts, the Cambridge Philosophical Society and the Cambridge
Lundgren Fund for their generous financial support.


\end{document}